\newcommand{\Sym}{\mathrm{Sym}}
\newcommand{\Skew}{\mathrm{Skew}}
\newcommand{\SO}{\operatorname{SO}(3)}
\newcommand{\MI}{{\mathbb I}}
\begin{document}

\title{On Controller Design for Systems on Manifolds in  Euclidean Space}

\author{Dong Eui Chang}

\authormark{Dong Eui Chang}

\address{\orgdiv{Electrical Engineering}, \orgname{Korea Advanced Institute of Science and Technology}, \orgaddress{\state{Daejeon}, \country{Korea}}}

\corres{Dong Eui Chang. \email{dechang@kaist.ac.kr}}

\presentaddress{Electrical Engineering, KAIST, 291 Deahak-ro, Yuseong-gu, Daejeon, 34141, Korea.}

\abstract[Summary]{A new method is developed to design controllers in Euclidean space for systems defined on manifolds. The idea is to embed the state-space manifold $M$ of a given control system  into some Euclidean space  $\mathbb R^n$,  extend the system from $M$ to the ambient space $\mathbb R^n$, and modify it outside $M$ to add transversal stability to $M$ in the final dynamics in $\mathbb R^n$. Controllers are designed for the final system in the ambient  space $\mathbb R^n$. Then,  their restriction to $M$ produces  controllers for the original system on  $M$.  This method has the  merit that only one single global Cartesian coordinate system in the ambient  space $\mathbb R^n$ is used for controller synthesis, and any controller design method in $\mathbb R^n$, such as the linearization method, can be globally applied for the controller synthesis.   The proposed method is successfully applied to the  tracking problem for the following two benchmark systems: the fully actuated rigid body system and the quadcopter drone system.}

\keywords{embedding,  tracking, manifold,  drone, quadcopter, rigid body}

\maketitle

\section{Introduction}
Many control systems are defined on manifolds that are not homeomorphic to Euclidean space, where we  use the term `Euclidean space' to mean some $\mathbb R^n$ space, not imposing any metric on it. The geometric, or coordinate-free, approach has been developed to deal with those systems without being dependent on the choice of coordinates.\cite{Bl07,BuLe04,Sa10} However, a state-space manifold often appears as an embedded manifold in  Euclidean space and the control system naturally extends from the manifold to the ambient Euclidean space: one example is the free rigid body system on $\SO \times \mathbb R^3$ which  naturally extends to  $\mathbb R^{3\times 3} \times \mathbb R^3$.  In such a case, it might be advantageous to use one single global Cartesian coordinate system in the ambient Euclidean space to design controllers for the original system on the manifold, eliminating the necessity to use  rather complex tools from differential geometry or multiple local coordinate systems. For example, in the case of the free rigid body system, neither adding nor subtracting two rotation matrices is allowed in the geometric approach  partly because the result does not lie on $\SO$, which may be mathematically orthodox, but would discourage control engineers from understanding or applying the geometric results.  Since any two rotation matrices, as $3\times 3$ matrices, can be  conveniently added or subtracted in $\mathbb R^{3\times 3}$, there is no reason to refrain from carrying out such  basic and convenient operations as additions and subtractions. Moreover, since one can utilize one single global Cartesian coordinate system in the ambient Euclidean space $\mathbb R^{3\times 3}$, he is free from  such discontinuities  as those that  often occur due to the switching of local coordinate systems and  chart-wise designed control laws.  As such, in this paper we propose a new method that is an  alternative to both the geometric approach, which adheres to differential geometric tools, and the classical approach, which employs local coordinates such as Euler angles for rigid bodies. 

A brief summary of the proposed method is provided as follows.  Given a control system $\Sigma_M$ whose dynamics evolve on a manifold $M$, we embed $M$ into some Euclidean space $\mathbb R^n$ and extend the system $\Sigma_M$ to a system $\Sigma_{\mathbb R^n}$ whose dynamics evolve in $\mathbb R^n$ or conservatively in a neighborhood of $M$ in $\mathbb R^n$. We then  legitimately modify the extended system $\Sigma_{\mathbb R^n}$ outside $M$ to add transversal stability  to $M$ while  the original dynamics on $M$ are kept intact. It follows that  $M$ becomes an attractive invariant manifold of the resulting system denoted $\tilde \Sigma_{\mathbb R^n}$.  We  apply any   controller design  method  available  in Euclidean space to design controllers  for $\tilde \Sigma_{\mathbb R^n}$ in $\mathbb R^n$  for stabilization of a point on $M$ or tracking of a reference trajectory on $M$, and then restrict the controllers to $M$ which yield controllers for the original system $\Sigma_M$ on $M$ for the stabilization or tracking on $M$.  To showcase this method,   the linearization  technique in $\mathbb R^n$  is chosen in this paper  to design tracking controllers although  we could alternatively apply other techniques  available in $\mathbb R^n$  such as  homogeneous approximation,\cite{Ha67} model predictive control,\cite{BoBeMo17}  iterative learning control,\cite{ScMuDa12} differential flatness,\cite{Le09} etc.

The  theory of embedding of manifolds in Euclidean space has a long history in mathematics, including  several famous  theorems  such as the Nash embedding theorems\cite{Na54,Na56} and the Whitney embedding theorem.\cite{Bo02} The embedding technique has been also   applied in control theory. For example, it was used  to produce a simple proof of  the Pontryagin maximum principle on manifolds,\cite{Ch11} and was  combined with the transversal stabilization technique to yield  feedback-based structure-preserving numerical integrators for simulation of  dynamical systems.\cite{ChJiPe16}
  A series of relevant works  have been made by  Maggiore and his collaborators on local transverse feedback linearizability of control-invariant submanifolds and virtual holonomic constraints.\cite{MaCo13,NiFuMa10,NiMa08}  The focus of Maggiore is placed on creation of a submanifold for a given system and its transversal stabilization via feedback for {\it path-following} controller synthesis,      whereas our work in this paper is focused on  embedding  and extending a state space manifold of a given system into  Euclidean space and its transversal stabilization for {\it tracking controller} synthesis. Moreover, our method has the merit to use one single global Euclidean coordinate system whereas the method by Maggiore does not. Another merit of our method is its openness to   accommodate   any existing control method developed in Euclidean space.

The paper is organized as follows. Section 2 is devoted to embedding into Euclidean space, transversal stabilization,   tracking controller design via linearization, and their application to the  rigid body system and the quadcopter drone system. Several tracking controllers are proposed for the two systems, and  the exponential convergence of their tracking error dynamics is rigorously proven  and numerical simulations are carried out to demonstrate the controllers' good tracking ability and  robustness to unknown disturbances.   The paper is concluded in Section 3.  The contributions of the paper are summarized as follows: 1.  the development of a new controller design methodology with the embedding and transversal stabilization technique which allows to convert difficult control problems on a manifold to  tractable control problem in Euclidean space and to use one single global Euclidean coordinate system in controller synthesis; and 2.  the design of exponentially tracking controllers with the developed method  for the rigid body system and the quadcopter system which are designed via linearization in ambient Euclidean space  but are still  expressed geometrically, i.e. in a coordinate-free manner. It is noted that a  presentation of    preliminary results  was given at the 56th IEEE Conference on Decision and Control.

\section{Main Results}
\subsection{Mathematical Preliminaries}
The usual Euclidean inner product is exclusively used for vectors and matrices in this paper, i.e.
\[
\langle A, B \rangle = \sum_{i,j}A_{ij}B_{ij} = \operatorname{tr}(A^TB)
\]
for any two matrices  of equal size.  The norm induced from this inner product, which is called the Frobenius or Euclidean norm, is exclusively used for vectors and matrices. Let $\Sym$ and $\Skew$ denote the symmetrization operator and the skew-symmetrization operator, respectively, on square matrices, which are defined by
\[
\Sym (A) = \frac{1}{2}(A+A^T), \quad \Skew (A) = \frac{1}{2}(A-A^T)
\]
for any square matrix $A$.
Then,
\[
A  = \Sym (A) + \Skew(A), \quad \langle \Sym (A), \Skew (A) \rangle = 0.
\]
 Namely, 
\[
\mathbb R^{n\times n} = \Sym (\mathbb R^{n\times n} ) \oplus \Skew (\mathbb R^{n\times n} )
\]
with respect to the Euclidean inner product. Let $[\, , ]$ denote the usual matrix commutator that is defined by $[A,B] = AB-BA$ for any pair of square matrices $A$ and $B$ of equal size.
It is easy to show that
\begin{align*}
[\Sym (\mathbb R^{n\times n} ), \Skew (\mathbb R^{n\times n} )] &\subset \Sym (\mathbb R^{n\times n} ),\\
[\Skew (\mathbb R^{n\times n} ), \Skew (\mathbb R^{n\times n} )] &\subset \Skew (\mathbb R^{n\times n} ),\\
[\Sym (\mathbb R^{n\times n} ), \Sym (\mathbb R^{n\times n} )] &\subset \Skew (\mathbb R^{n\times n} ).
\end{align*}
In other words,  $[A,C] = [A,C]^T$ for all  $A = A^T \in \mathbb R^{n\times n}$ and $C = -C^T \in \mathbb R^{n\times n}$;  $[B,C] = -[B,C]^T$   for all  $B = -B^T \in \mathbb R^{n\times n}$ and  $C = -C^T \in \mathbb R^{n\times n}$; and $[B,C] = -[B,C]^T$   for all  $B = B^T \in \mathbb R^{n\times n}$ and  $C = C^T \in \mathbb R^{n\times n}$.
Let $\SO$ denote the set of all $3\times 3$ rotation matrices, which is defined as $\SO = \{ R\in \mathbb R^{3\times 3} \mid R^T R - I= 0, \det R>0\}$.  Let $\mathfrak{so}(3)$ denote the set of all $3\times 3$ skew symmetric matrices, which is defined as $\mathfrak{so}(3) = \{ A \in \mathbb R^{3\times 3} \mid A^T+ A = 0 \}$. The hat map $\wedge : \mathbb R^3 \rightarrow \mathfrak{so}(3)$ is defined by
\[
\hat \Omega = \begin{bmatrix}
0 & -\Omega_3 &\Omega_2 \\
\Omega_3 & 0 & -\Omega_1\\
-\Omega_2 & \Omega_1 & 0
\end{bmatrix}
\]
for $\Omega = (\Omega_1, \Omega_2,\Omega_3) \in \mathbb R^3$.  The inverse map of the hat map is called the vee map and denoted  $\vee$ such that $(\hat \Omega)^\vee = \Omega$ for all $\Omega \in \mathbb R^3$ and $(A^\vee)^\wedge = A$ for all $A\in \mathfrak{so}(3)$.  
\begin{lemma}\label{lemma:prelim}
1. $\langle R A, R B\rangle = \langle AR, BR\rangle  = \langle A, B\rangle$   for all $R \in \SO$ and $A,B \in \mathbb R^{3\times 3}$.

2. $\max_{R_1, R_2 \in \SO}\| R_1 - R_2\| = 2\sqrt 2$. 

3. $\langle \hat u, \hat v \rangle = 2 \langle u, v\rangle$ for all $u, v \in \mathbb R^3$.

4. $[\hat u, \hat v]= (u \times v)^\wedge$  and $\hat u v = u \times v$ for all $u, v \in \mathbb R^3$.
\end{lemma}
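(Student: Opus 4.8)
The plan is to reduce every statement to a trace computation, exploiting the identity $\langle A, B\rangle = \operatorname{tr}(A^T B)$ established above. For part 1, I would write $\langle RA, RB\rangle = \operatorname{tr}(A^T R^T R B)$ and use the defining relation $R^T R = I$ of $\SO$ to collapse this to $\operatorname{tr}(A^T B) = \langle A, B\rangle$; the second equality follows identically, writing $\langle AR, BR\rangle = \operatorname{tr}(R^T A^T B R)$ and invoking the cyclic invariance of the trace together with $R R^T = I$. Part 3 and the second identity of part 4, namely $\hat u\, v = u\times v$, are purely computational: by bilinearity of both sides in $u$ and $v$ it suffices to verify them on the standard basis $\{e_1, e_2, e_3\}$, or to carry out the single entrywise sum using the explicit form of the hat map given above. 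The factor $2$ in part 3 arises because each off-diagonal pair of entries of $\hat u$ contributes its product twice.

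The genuinely substantive claim is part 2, and this is where I expect the main obstacle. The first step is to expand $\|R_1 - R_2\|^2 = \|R_1\|^2 - 2\langle R_1, R_2\rangle + \|R_2\|^2$ and observe that $\|R\|^2 = \operatorname{tr}(R^T R) = \operatorname{tr}(I) = 3$ for every $R \in \SO$, so that maximizing the distance reduces to $\max\|R_1-R_2\|^2 = 6 - 2\min \langle R_1, R_2\rangle$. By part 1, $\langle R_1, R_2\rangle = \langle I, R_1^T R_2\rangle = \operatorname{tr}(R_1^T R_2)$, and since $Q := R_1^T R_2$ ranges over all of $\SO$ as $R_1, R_2$ do, the task becomes the minimization of $\operatorname{tr}(Q)$ over $Q \in \SO$. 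The crux is the spectral fact that any $Q \in \SO$ has eigenvalues $1, e^{i\theta}, e^{-i\theta}$, whence $\operatorname{tr}(Q) = 1 + 2\cos\theta \geq -1$, with the bound $-1$ attained at $\theta = \pi$ (for instance by $Q = \operatorname{diag}(1,-1,-1)$). This yields $\max\|R_1 - R_2\|^2 = 6 - 2(-1) = 8$, hence the value $2\sqrt 2$.

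Finally, for the commutator identity $[\hat u, \hat v] = (u\times v)^\wedge$ in part 4, I would again use bilinearity to reduce to the basis, checking $[\hat e_i, \hat e_j] = (e_i \times e_j)^\wedge$ for $i, j \in \{1,2,3\}$; this is precisely the statement that the structure constants of $\mathfrak{so}(3)$ under the commutator coincide with those of $\mathbb R^3$ under the cross product, and it is verified by a handful of small matrix products (the diagonal cases vanishing, the remaining ones paired by antisymmetry). Apart from part 2, every step is routine; the only place demanding an idea beyond direct computation is the eigenvalue argument bounding the trace on $\SO$.
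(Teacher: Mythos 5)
Your proposal is correct in all four parts. Note, however, that the paper states this lemma without any proof at all, treating its contents as standard facts, so there is no argument of the author's to compare against. Your reductions to trace identities for parts 1 and 2 and to basis verification for parts 3 and 4 are the natural ones, and the one genuinely non-routine step --- bounding $\operatorname{tr}(Q)\geq -1$ over $Q\in\SO$ via the eigenvalues $1, e^{i\theta}, e^{-i\theta}$, with equality at $\theta=\pi$ --- is handled correctly and is exactly what is needed to get the value $2\sqrt{2}$.
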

 Given a function $f: A \rightarrow B$ and a subset $C$ of $B$, the set $f^{-1}(C)$ is defined as $f^{-1}(C)  = \{ a\in A \mid f(a) \in C\}$. In particular, when $C$ consists of a single point, say $c$, we just write $f^{-1}(c)$ to mean $f^{-1}(\{c\})$. Every function and manifold is assumed to be smooth in this paper unless stated otherwise. Stability, stabilization and tracking are all understood to be local unless globality is stated explicitly.  The reader is referred to the book by Bloch\cite{Bl07} for more information on manifolds. 

\subsection{Embedding in Euclidean Space and Transversal Stabilization}

\subsubsection{Theory}
Let $M$ be an $m$-dimensional regular manifold in $\mathbb R^n$, where $m < n$. 
Consider a control system $\Sigma_M$ on $M$ given by
\begin{equation}\label{Sigma:M}
\Sigma_M: \quad \dot x = X(x,u), \quad x \in  M, u \in \mathbb R^k.
\end{equation}
Notice that 
\begin{equation}\label{X:in:TM}
X(x,u) \in T_xM  \quad \forall x \in  M, u \in \mathbb R^k,
\end{equation}
where $T_xM$ denotes the tangent space to $M$ at $x$.  Suppose that there is a control system  $\Sigma_{\mathbb R^n}$ on $\mathbb R^n$ given by
\begin{equation}\label{nonlinear:ambient}
\Sigma_{\mathbb R^n}: \quad \dot x = X_e(x,u), \quad x \in \mathbb R^n, u \in \mathbb R^k,
\end{equation}
that satisfies
\begin{equation}\label{Xe:X}
X_e(x,u) = X(x,u) \quad \forall x \in M, u \in \mathbb R^k.
\end{equation}
In other words, $\Sigma_{\mathbb R^n}$ is an extension of $\Sigma_M$ to $\mathbb R^n$ and $\Sigma_M$ becomes a restriction of $\Sigma_{\mathbb R^n}$ to $M$.  By \eqref{X:in:TM} and \eqref{Xe:X}, $M$ is an invariant manifold of $\Sigma_{\mathbb R^n}$.

Suppose that there is a function $\tilde V: \mathbb R^n \rightarrow \mathbb R_{\geq 0}$ such that 
\begin{equation}\label{M:zero:level:V}
M = \tilde V^{-1}(0)
\end{equation}
and 
\begin{equation}\label{nt:V}
\nabla \tilde V (x) \cdot X_e(x,u) = 0
\end{equation}
for all $x \in \mathbb R^n$ and $u \in \mathbb R^k$.
With this function,  construct a  system $\tilde \Sigma_{\mathbb R^n}$ in $\mathbb R^n$ as
\begin{align}\label{Sigma:tilde}
\tilde \Sigma_{\mathbb R^n}: \quad \dot x& = \tilde X_e(x,u), \quad x \in \mathbb R^n, u \in \mathbb R^k,
\end{align}
where the vector field $\tilde X_e$ is defined by
\begin{equation}\label{def:X:tilde}
\tilde X_e(x,u) =X_e(x,u) -\nabla \tilde V(x)  \quad \forall x \in \mathbb R^n, u \in \mathbb R^k.
\end{equation}
Since every point in $M$ is a minimum point of $V$,  $\nabla V(x)$ vanishes on $M$ identically. Hence, by \eqref{Xe:X} and \eqref{def:X:tilde}
\begin{equation}\label{tilde:Xe:X}
\tilde X_e(x,u) = X(x,u) \quad \forall x \in M, u \in \mathbb R^k.
\end{equation}
In other words, the system $\tilde \Sigma_{\mathbb R^n}$ coincides with the original system $\Sigma_M$ on $M$. Hence, $M$ is an invariant manifold of $\tilde \Sigma_{\mathbb R^n}$ as well. Along any flow of $\tilde \Sigma_{\mathbb R^n}$
\begin{equation}\label{dVdt:less}
\frac{d}{dt} \tilde V= \nabla \tilde V \cdot  (X_e - \nabla \tilde V) = - \|\nabla \tilde V\|^2 \leq 0
\end{equation}
by \eqref{nt:V}. 

\begin{theorem}\label{theorem:attractive:M}
If there are positive numbers $b$  and $r$ such that 
\begin{equation}\label{hypothesis:lambda}
b  \tilde V (x) \leq  \| \nabla \tilde V(x) \|^2 
\end{equation}
for all $x \in \tilde V^{-1}([0,r))\subset \mathbb R^n$, then $\tilde V^{-1}([0,r))$ is positively invariant for $\tilde \Sigma_{\mathbb R^n}$ and every flow of $\tilde \Sigma_{\mathbb R^n}$ starting  in $\tilde V^{-1}([0,r))$ converges to $M$ as $t\rightarrow \infty$. In particular, $\tilde V(x(t))  \leq \tilde V(x(0)) e^{-b t} $ for all $t\geq 0$ and $x(0) \in \tilde V^{-1}([0,r))$.
\begin{proof}
It follows from   \eqref{dVdt:less} and \eqref{hypothesis:lambda} that for any initial state $x(0) \in  \tilde V^{-1}([0,r))$, $\tilde V(x(t)) \leq \tilde V(x(0)) e^{-b t} < r e^{-b t}$ for all $t\geq 0$, where $x(t)$ is the flow of  $\tilde \Sigma_{\mathbb R^n}$ starting from $x(0)$. It implies that  $\tilde V^{-1}([0,r))$ is a positively invariant set of $\tilde \Sigma_{\mathbb R^n}$ and that  $\lim_{t\rightarrow\infty} \tilde  V(x(t)) = 0$. From \eqref{M:zero:level:V} and the continuity of  $\tilde V$, it follows that $x(t)$ converges to $M$ as $t\rightarrow \infty$.
\end{proof}
\end{theorem}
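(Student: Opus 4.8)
The plan is to reduce the entire statement to a scalar differential inequality for $\tilde V$ evaluated along trajectories. The decay identity \eqref{dVdt:less} already gives $\frac{d}{dt}\tilde V = -\|\nabla \tilde V\|^2$ along any flow of $\tilde\Sigma_{\mathbb R^n}$, and the standing hypothesis \eqref{hypothesis:lambda} bounds this from above by $-b\tilde V$ at every point of the sublevel set $\tilde V^{-1}([0,r))$. Combining the two, I obtain $\frac{d}{dt}\tilde V \le -b\tilde V$ wherever $x$ lies in that sublevel set. Integrating this (via the comparison lemma, equivalently Gr\"onwall's inequality) yields the exponential bound $\tilde V(x(t)) \le \tilde V(x(0))\,e^{-bt}$, which is precisely the ``in particular'' claim.

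The delicate point, and the step I would treat most carefully, is that \eqref{hypothesis:lambda} is assumed only on $\tilde V^{-1}([0,r))$, so the differential inequality is a priori valid only while the trajectory remains inside this set; I cannot integrate it blindly over all of $[0,\infty)$. I would close this gap with a maximal-interval bootstrap: let $[0,T)$ be the largest interval on which $x(s)\in \tilde V^{-1}([0,r))$. On $[0,T)$ the inequality above is legitimate, and since $e^{-bt}\le 1$ together with $\tilde V(x(0))<r$ forces $\tilde V(x(t))<r$ for all $t\in[0,T)$, the trajectory never actually reaches the boundary level $r$. Consequently it cannot exit the sublevel set, which forces $T=\infty$ (invoking forward completeness of the flow in this set), establishing positive invariance of $\tilde V^{-1}([0,r))$ and validating the exponential estimate for all $t\ge 0$.

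Finally, convergence to $M$ follows by a squeeze: from $0\le \tilde V(x(t)) \le \tilde V(x(0))\,e^{-bt}$ one gets $\tilde V(x(t))\to 0$ as $t\to\infty$. Because $M=\tilde V^{-1}(0)$ by \eqref{M:zero:level:V} and $\tilde V$ is continuous, vanishing of $\tilde V$ along the trajectory forces the distance from $x(t)$ to $M$ to tend to zero, giving $x(t)\to M$.

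I expect the main obstacle to be the invariance bootstrap rather than any computation: one must rule out the trajectory escaping $\tilde V^{-1}([0,r))$ before the exponential decay can be invoked, since the hypothesis fails outside that set. The non-strict bound $e^{-bt}\le 1$ is exactly the mechanism that prevents such escape, and making this argument airtight (including the appeal to completeness of the flow within the sublevel set) is the only genuinely non-routine part of the proof.
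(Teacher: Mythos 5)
Your proposal is correct and follows essentially the same route as the paper: combine the decay identity $\frac{d}{dt}\tilde V = -\|\nabla\tilde V\|^2$ with the hypothesis to get $\frac{d}{dt}\tilde V \le -b\tilde V$, integrate to obtain the exponential bound, and deduce invariance and convergence to $M$ from $M=\tilde V^{-1}(0)$ and continuity. The only difference is that you spell out the maximal-interval bootstrap justifying why the trajectory cannot leave the sublevel set before the Gr\"onwall estimate applies, a step the paper asserts in one line without elaboration; your version is the more careful one.
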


 The following corollary shows a typical situation in which  to construct such a function $\tilde V$ that satisfies \eqref{M:zero:level:V}, \eqref{nt:V} and the hypothesis of Theorem \ref{theorem:attractive:M}. 
 \begin{corollary}\label{corollary:construction:V}
 Suppose that there is a function $F: \mathbb R^n \rightarrow \mathbb R^{n-m}$  such that $M = F^{-1}(0)$; that there is an open set $S \subset \mathbb R^n$ such that $M \subset S$ and every point in $S$ is a regular point of $F$;  that  $DF(x) \cdot X_e(x,u) = 0$ for all $(x,u) \in S\times \mathbb R^k$; and that there is a number $c>0$ such that the smallest singular value of $\| DF(x)\|$ is larger than $c$ for every $x\in S$.  Suppose also that $\tilde V(x) = F(x)^T K F(x)$ is used to define the system $\tilde \Sigma_{\mathbb R^n}$ in \eqref{Sigma:tilde} and   \eqref{def:X:tilde}, where $K$ is an $(n-m)\times (n-m)$ positive definite symmetric matrix.
 Then, there is an open set $W$ in $\mathbb R^n$ with $M \subset W$ such that every trajectory of $\tilde \Sigma_{\mathbb R^n}$  starting in $W$ remains in  $W$ for all future time and exponentially converges to $M$ as $t \rightarrow \infty$.
 \begin{proof}
 Let $\tilde V(x) = F(x)^T K F(x)$, where $K$ is an $(n-m)\times (n-m)$ positive definite symmetric matrix.  Then, $\nabla \tilde V(x) = 2DF(x)^T KF(x)$ in column vector form. It is easy to show that this function $\tilde V$ satisfies \eqref{M:zero:level:V} and \eqref{nt:V} for all $(x,u) \in S \times \mathbb R^k$.  By hypothesis, $ \|\nabla  \tilde V(x)\|  = \|  2DF(x)^T KF(x)\|\geq  2c \|K F(x)\| \geq 2c \lambda_{\rm min}(K) \|F(x)\|$ for all $x\in S$. Hence, for any $x\in S$, $\|\tilde V(x)\| \leq \lambda_{\rm \max}(K) \|F(x)\|^2 \leq  (\lambda_{\rm \max}(K) / 4c^2 \lambda_{\rm min}(K)^2) \|\nabla \tilde V(x)\|^2$. Let $b = 4c^2 \lambda_{\rm min}(K)^2/ \lambda_{\rm \max}(K)$ and choose a number $r>0$ such that $\tilde V^{-1}([0,r)) \subset S$ which is possible due to continuity of the function $\tilde V$. With these numbers $b$ and $r$,  the hypothesis of Theorem \ref{theorem:attractive:M} holds true. Hence, by Theorem \ref{theorem:attractive:M}, $W:= \tilde V^{-1}([0,r))$ is a positively invariant region of attraction for  $\tilde \Sigma_{\mathbb R^n}$, and  $\tilde V(x(t)) \leq \tilde V(x(0))e^{-bt}$ for all  $x(0) \in W$ and $t\geq 0$. This inequality implies that  
 \[
 \|F(x(t))\| \leq A \|F(x(0))\| e^{- bt/2}
 \]
 for all $x(0) \in W$ and all $t\geq 0$, where $A = \sqrt{\lambda_{\rm max}(K)/\lambda_{\rm min}(K)}$. Since every point of $W$ is a regular point of $F$, $F(x)$ can be used as part of local coordinates such that $M = \{F(x) = 0\}$. Hence,  the above inequality shows that the convergence  of $x(t)$ to $M$ is exponential.
 \end{proof}
 \end{corollary}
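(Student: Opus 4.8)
The plan is to verify that the explicit choice $\tilde V(x) = F(x)^T K F(x)$ satisfies all the hypotheses of Theorem \ref{theorem:attractive:M}, and then to upgrade the resulting decay of $\tilde V$ into genuine exponential convergence of the state to $M$. First I would compute the gradient, obtaining $\nabla \tilde V(x) = 2\, DF(x)^T K F(x)$ in column form by the chain rule together with the symmetry of $K$.

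Next I would check the two structural conditions. Condition \eqref{M:zero:level:V}, namely $M = \tilde V^{-1}(0)$, is immediate: since $K$ is positive definite, $\tilde V(x) = 0$ holds exactly when $F(x) = 0$, i.e. exactly when $x \in M = F^{-1}(0)$. Condition \eqref{nt:V}, the orthogonality $\nabla \tilde V(x) \cdot X_e(x,u) = 0$, follows on $S \times \mathbb R^k$ from $\nabla \tilde V \cdot X_e = 2\, F^T K\, (DF \cdot X_e) = 0$, using the standing assumption $DF(x) \cdot X_e(x,u) = 0$.

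The main work is establishing the comparison inequality \eqref{hypothesis:lambda}, for which I would bound the gradient from below and $\tilde V$ from above. For the lower bound, the smallest-singular-value assumption gives $\|DF(x)^T w\| \ge c\|w\|$ for all $w$, so $\|\nabla \tilde V(x)\| = \|2\, DF(x)^T K F(x)\| \ge 2c\|K F(x)\| \ge 2c\,\lambda_{\min}(K)\|F(x)\|$ on $S$. For the upper bound, $\tilde V(x) = F(x)^T K F(x) \le \lambda_{\max}(K)\|F(x)\|^2$. Combining these yields $\tilde V(x) \le \bigl(\lambda_{\max}(K)/(4c^2\lambda_{\min}(K)^2)\bigr)\|\nabla \tilde V(x)\|^2$, so \eqref{hypothesis:lambda} holds with $b = 4c^2\lambda_{\min}(K)^2/\lambda_{\max}(K)$. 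Since $\tilde V$ is continuous and vanishes on $M \subset S$, I can choose $r > 0$ small enough that $W := \tilde V^{-1}([0,r)) \subset S$; on this set the hypothesis of Theorem \ref{theorem:attractive:M} is met, and the theorem delivers positive invariance of $W$ together with $\tilde V(x(t)) \le \tilde V(x(0))\,e^{-bt}$.

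The step I expect to be most delicate is the final one: translating decay of the scalar $\tilde V$ into exponential convergence of $x(t)$ to the manifold $M$ itself. From the two-sided bounds above, the decay of $\tilde V$ gives $\|F(x(t))\| \le A\,\|F(x(0))\|\,e^{-bt/2}$ with $A = \sqrt{\lambda_{\max}(K)/\lambda_{\min}(K)}$. The subtlety is that smallness of $\|F\|$ must be shown to control the distance to $M$, and this is exactly where the regular-point hypothesis on $S$ is essential: because every point of $W$ is a regular point of $F$, the components of $F$ can be completed to a local coordinate system in which $M$ is precisely the slice $\{F = 0\}$, so that $\|F\|$ is locally equivalent to the distance to $M$. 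Hence the exponential decay of $\|F(x(t))\|$ is precisely exponential convergence of $x(t)$ to $M$, which completes the argument.
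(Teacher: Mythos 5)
Your proposal is correct and follows essentially the same route as the paper's own proof: the same gradient computation, the same two-sided bounds $2c\lambda_{\min}(K)\|F(x)\|\leq\|\nabla\tilde V(x)\|$ and $\tilde V(x)\leq\lambda_{\max}(K)\|F(x)\|^2$ yielding the same constant $b$, the same invocation of Theorem \ref{theorem:attractive:M} on $W=\tilde V^{-1}([0,r))\subset S$, and the same final step converting decay of $\|F(x(t))\|$ into exponential convergence to $M$ via the regular-point/local-coordinate argument. The only difference is that you spell out the verifications the paper labels ``easy to show.''
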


Our  goal is to design controllers for the system $\Sigma_M$ whose dynamics evolve on the manifold $M$. Since the  system $\tilde \Sigma_{\mathbb R^n}$ in $\mathbb R^n$
 coincides with $\Sigma_M$ on $M$, and $M$ is an invariant manifold of $\tilde\Sigma_{\mathbb R^n}$, we can first design controllers for $\tilde \Sigma_{\mathbb R^n}$ in one single global Cartesian coordinate system for $\mathbb R^n$ and then restrict them to $M$ to come up with controllers for the original system $\Sigma_M$. This method becomes much more tractable when $M$ is an attractive invariant manifold of $\tilde \Sigma_{\mathbb R^n}$, which is guaranteed by the hypothesis in Theorem \ref{theorem:attractive:M}. Notice that the size of the region of attraction of $M$ for the $\tilde \Sigma_{\mathbb R^n}$ dynamics is immaterial since the set $\mathbb R^n \backslash M$ is not a region of interest but only an auxiliary ambient region in which we take full advantage of the Euclidean structure of  $\mathbb R^n$. 

\subsubsection{Application to the Rigid Body System}
As a main example throughout the paper, we use the  free rigid body system with full actuation whose equations of motion are  given by
\begin{subequations}\label{rigid:original}
\begin{align}
\dot R &= R\hat \Omega, \\
\dot \Omega &= \MI^{-1} ( \MI \Omega \times \Omega) + \MI^{-1} \tau,
\end{align}
\end{subequations}
where $(R,\Omega) \in \SO \times  \mathbb R^3 \subset \mathbb R^{3\times 3} \times \mathbb R^3$ is the state vector consisting of a rotation matrix $R$ and a body angular velocity vector $\Omega$; $\tau \in \mathbb R^3$ is the control torque; and $\mathbb I$ is the moment of inertial matrix of the rigid body. From here on, we regard the system \eqref{rigid:original} as a system defined on  $\mathbb R^{3\times 3} \times \mathbb R^3$, treating $R$ as a $3\times 3$ matrix.  It is then easy to verify that $\SO \times \mathbb R^3$ is an invariant set of \eqref{rigid:original}, i.e. every flow starting in $M$ remains in $M$ for all $t\in \mathbb R$.
Assume that the full state of the system is available, which allows us to apply the following controller 
\begin{equation}\label{calcelling:controller}
\tau = \MI (u - \MI^{-1}(\MI \Omega \times \Omega))
\end{equation}
to transform the above system to
\begin{subequations}\label{rigid:eq}
\begin{align}
\dot R &= R\hat \Omega, \label{R:eq}\\
\dot \Omega &= u, \label{Omega:eq}
\end{align}
\end{subequations}
where $u$ is the new control vector. Note that $\SO \times \mathbb R^3$ is an invariant set of \eqref{rigid:eq}.  Let $
\operatorname{GL}^+(3) = \{ R \in \mathbb R^{3\times 3} \mid \det R >0\}$ 
 and define a function $\tilde V: \operatorname{GL}^+(3) \times \mathbb R^3\subset \mathbb R^{3\times 3} \times\mathbb R^3 \rightarrow \mathbb R_{\geq 0}$ by 
\begin{equation}\label{def:V:tilde}
\tilde V(R,\Omega) = \frac{k_e}{4}\|R^TR - I\|^2,
\end{equation}
where $k_e>0$ is a constant. It is easy to verify that $\tilde V^{-1}(0) = \SO\times \mathbb R^3$ and 
\begin{equation}\label{nabla:V:tilde}
\nabla_R \tilde V = -k_eR(R^TR - I), \quad \nabla_\Omega \tilde V = 0.
\end{equation}
With this function $\tilde V$,  the modified rigid body system corresponding to \eqref{Sigma:tilde} and \eqref{def:X:tilde} is computed as
\begin{subequations}\label{rigid:tilde:eq}
\begin{align}
\dot R &= R\hat \Omega - k_eR(R^TR - I), \label{R:s:eq}\\
\dot \Omega &= u, \label{Omega:s:eq}
\end{align}
\end{subequations}
where $(R,\Omega) \in\operatorname{GL}^+(3) \times \mathbb R^3 \subset \mathbb R^{3\times 3} \times \mathbb R^3$.

We now show that Theorem \ref{theorem:attractive:M} holds in the rigid body case.
\begin{lemma}\label{lemma:attractive:M:rigid} 
There are numbers $b>0$ and $r>0$ such that 
\[
b \tilde V(R,\Omega) \leq \| \nabla \tilde V(R,\Omega)\|^2
\]
for all $(R,\Omega) \in \tilde V^{-1}([0,r))$. 
\begin{proof}
Define an auxiliary function $f: \operatorname{GL}^+(3) \rightarrow \mathbb R_{\geq 0}$ by
\[
f(R) = \frac{k_e}{4} \|R^TR -I\|^2      
\]
for $R \in \operatorname{GL}^+(3)$.
Take any sufficiently small $\epsilon>0$ such that every $A \in \mathbb R^{3\times 3}$ satisfying $
\|A - I\| \leq \epsilon$
is invertible. Let $r = k_e\epsilon^2/4$.
 Then, if $R\in f^{-1}([0,r])$,  $\|R^TR -I\| \leq \epsilon$,
so $R^TR$ is invertible, which implies that $R$ is also invertible. Hence, $f^{-1}([0,r]) \subset \operatorname{GL}^+(3)$.
For each $i = 1,2,3$ and any $R\in \mathbb R^{3\times 3}$,
\[
\sum_{j = 1}^3 R_{ji}^2 =  | (R^TR)_{ii} |  \leq \|R^TR\| 
\]
which implies
\begin{equation}\label{R:bound}
\| R\|^2 =  \sum_{i=1}^3\sum_{j = 1}^3 R_{ji}^2 \leq 3 \|R^TR\|
\end{equation}
for any $R\in \mathbb R^{3\times 3}$. 
Hence for any $R \in f^{-1}([0,r])$,
\[
\|R^TR\| \leq \|R^TR - I \|  + \|I\| \leq \epsilon + 3,
\]
which implies by \eqref{R:bound} that $\|R\| \leq \sqrt{3\epsilon + 9}$
for all $R \in f^{-1}([0,r])$. It follows that $f^{-1}([0,r])$ is compact in $\mathbb R^{3\times 3}$, being closed and bounded. Since  the matrix inversion operation is continuous, the image of $f^{-1}([0,r])$ under matrix inversion is also compact. Hence, there is a number $M>0$ such that $\|R^{-1}\| \leq M$
for all $R \in  f^{-1}([0,r])$. Hence, for any $(R,\Omega) \in \tilde V^{-1}([0,r))$
\begin{align*}
\|R^TR -I\| = \|  R^{-1}R(R^TR-I)\|\leq \|R^{-1}\| \|R(R^TR-I)\| \leq M  \|R(R^TR-I)\|
\end{align*}
which implies $b \tilde V(R, \Omega) \leq \|\nabla \tilde V \|$ for all $(R,\Omega ) \in \tilde V^{-1}([0,r))$ by \eqref{def:V:tilde} and \eqref{nabla:V:tilde},
where  $
b = 4k_e/M^2$.
This completes the proof.
\end{proof}
\end{lemma}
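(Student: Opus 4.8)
The plan is to reduce the claimed differential inequality to a comparison between two matrix norms and then secure that comparison by a uniform bound on the size of $R^{-1}$. From the given formulas $\tilde V(R,\Omega) = \frac{k_e}{4}\|R^TR-I\|^2$ and $\nabla_R \tilde V = -k_eR(R^TR-I)$, $\nabla_\Omega\tilde V = 0$, we have $\|\nabla\tilde V\|^2 = k_e^2\|R(R^TR-I)\|^2$, so the target inequality $b\tilde V \leq \|\nabla\tilde V\|^2$ is equivalent to a bound of the form $\|R^TR-I\| \leq M\|R(R^TR-I)\|$. The one-line identity that makes this work is $R^TR - I = R^{-1}\bigl(R(R^TR-I)\bigr)$, which by submultiplicativity of the Frobenius norm gives $\|R^TR-I\| \leq \|R^{-1}\|\,\|R(R^TR-I)\|$ as soon as $R$ is invertible. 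Thus everything rests on producing a neighborhood of $\SO$ on which $R$ is invertible and $\|R^{-1}\|$ is uniformly bounded.

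First I would localize. Since $\tilde V$ does not depend on $\Omega$, I would work with the auxiliary function $f(R) = \frac{k_e}{4}\|R^TR-I\|^2$ of the rotation variable alone; its value equals $\tilde V$ but its sublevel sets live in $\mathbb R^{3\times 3}$ rather than in the noncompact set $\SO\times\mathbb R^3$. Choosing $\epsilon>0$ small enough that $\|A-I\|\leq\epsilon$ forces $A$ to be invertible (possible since the invertible matrices form an open set containing $I$) and setting $r = k_e\epsilon^2/4$, any $R$ with $f(R)\leq r$ satisfies $\|R^TR-I\|\leq\epsilon$; hence $R^TR$ is invertible and therefore so is $R$.

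The crux is the compactness argument that yields the uniform bound $M$ on $\|R^{-1}\|$, and this is the step I expect to be the main obstacle, since $\|R^{-1}\|$ could in principle blow up as $R$ approaches singularity. I would first bound $R$ itself: using $\|R\|^2 = \operatorname{tr}(R^TR)\leq 3\|R^TR\|$ together with $\|R^TR\|\leq\|R^TR-I\|+\|I\|\leq\epsilon+\sqrt3$ shows that $f^{-1}([0,r])$ is bounded; being a closed sublevel set of a continuous function, it is compact in $\mathbb R^{3\times 3}$, and since it lies inside the invertible matrices, continuity of matrix inversion makes its image under $R\mapsto R^{-1}$ compact as well, giving a finite $M$ with $\|R^{-1}\|\leq M$ throughout. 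Combining the pieces, on $\tilde V^{-1}([0,r))$ the inequality $\|R^TR-I\|\leq M\|R(R^TR-I)\|$ holds; squaring it and inserting the expressions for $\tilde V$ and $\|\nabla\tilde V\|^2$ yields $b\tilde V \leq \|\nabla\tilde V\|^2$ with $b = 4k_e/M^2$, which is the asserted conclusion.
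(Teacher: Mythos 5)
Your proposal is correct and follows essentially the same route as the paper's own proof: reduce the inequality to $\|R^TR-I\| \leq M\|R(R^TR-I)\|$ via the identity $R^TR-I = R^{-1}\bigl(R(R^TR-I)\bigr)$, then obtain the uniform bound $M$ on $\|R^{-1}\|$ by showing the sublevel set $f^{-1}([0,r])$ is compact and consists of invertible matrices, arriving at the same constant $b = 4k_e/M^2$. The only (immaterial) difference is your sharper estimate $\|I\| = \sqrt{3}$ where the paper uses the cruder bound $3$.
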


\begin{theorem}\label{theorem:attractive:M:rigid:exponential} 
There is a number $r>0$ such that every trajectory of \eqref{rigid:tilde:eq} starting in  $\tilde V^{-1}([0,r))$ remains in $\tilde V^{-1}([0,r))$ for all future time and converges exponentially to $\SO \times \mathbb R^3$ as $t\rightarrow \infty$.  
\begin{proof}
Pick such numbers $b$ and $r$ as in the statement of Lemma \ref{lemma:attractive:M:rigid}.
By Lemma \ref{lemma:attractive:M:rigid} and Theorem \ref{theorem:attractive:M},  every trajectory of \eqref{rigid:tilde:eq} starting in  $\tilde V^{-1}([0,r))$ remains in $\tilde V^{-1}([0,r))$ for all future time and converges to $\SO \times \mathbb R^3$ as $t\rightarrow \infty$.  
Let $(R(t), \Omega (t))$ be an arbitrary trajectory staring in $\tilde V^{-1}([0,r))$ at $t=0$. Then, by Theorem \ref{theorem:attractive:M}, it satisfies
\[
\| R^T(t) R(t) - I \| \leq \|R^T(0) R(0) - I \| e^{-bt/2}
\]
for all $t\geq 0$. It follows that the convergence of $(R(t), \Omega (t))$ to $\SO \times \mathbb R^3$ is exponential since the $3\times 3$ zero matrix is a regular value of  the map $g: \operatorname{GL}^+(3)  \rightarrow \Sym(\mathbb R^{3\times 3})$ defined by $g(R) = R^T R - I$ such that   $\SO = \{ R\in  \operatorname{GL}^+(3)   \mid g(R) = 0\}$; refer to pp.22--23 of Guillemin and Pollack\cite{GuPo74} to see why the zero matrix is a regular value of $g$. 
\end{proof}
\end{theorem}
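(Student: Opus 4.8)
The plan is to let the two results already in place do almost all of the work. First I would select the constants $b>0$ and $r>0$ furnished by Lemma~\ref{lemma:attractive:M:rigid}, which certifies that the differential inequality $b\tilde V \le \|\nabla \tilde V\|^2$ holds throughout the sublevel set $\tilde V^{-1}([0,r))$ for the function $\tilde V$ of \eqref{def:V:tilde}. With this hypothesis verified, Theorem~\ref{theorem:attractive:M} applies verbatim to the modified rigid body dynamics \eqref{rigid:tilde:eq}, since those dynamics are exactly the system $\tilde \Sigma_{\mathbb R^n}$ of \eqref{Sigma:tilde}--\eqref{def:X:tilde} built from this $\tilde V$. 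It yields that $\tilde V^{-1}([0,r))$ is positively invariant and that along any trajectory starting there one has the exponential estimate $\tilde V(R(t),\Omega(t)) \le \tilde V(R(0),\Omega(0))\,e^{-bt}$, so in particular every such trajectory converges to $\SO \times \mathbb R^3 = \tilde V^{-1}(0)$.

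Next I would convert the decay of $\tilde V$ into decay of the natural transverse quantity $\|R^TR - I\|$. Since $\tilde V(R,\Omega) = \tfrac{k_e}{4}\|R^TR-I\|^2$, taking square roots in the preceding estimate gives $\|R^T(t)R(t) - I\| \le \|R^T(0)R(0)-I\|\,e^{-bt/2}$ for all $t\ge 0$, which is the intermediate inequality. Note that the $\Omega$ factor is unconstrained, so membership in $\SO \times \mathbb R^3$ is governed entirely by the condition $R^TR - I = 0$; thus controlling $\|R^TR-I\|$ is the whole story.

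The remaining, and genuinely the only nontrivial, step is to argue that exponential decay of $\|R^TR-I\|$ is the same as exponential convergence of the state to the manifold, rather than merely decay of one particular scalar functional. The hard part is that ``distance to $\SO \times \mathbb R^3$'' and ``$\|R^TR-I\|$'' are a priori different measurements, so I must show they are locally comparable. The key is that the $3\times 3$ zero matrix is a regular value of the map $g:\operatorname{GL}^+(3) \to \Sym(\mathbb R^{3\times 3})$, $g(R)=R^TR-I$, with $\SO = g^{-1}(0)$; the surjectivity of $Dg(R)$ onto $\Sym(\mathbb R^{3\times 3})$ at each point of $\SO$ is verified exactly as on pp.~22--23 of Guillemin and Pollack\cite{GuPo74}. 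Because $g$ is a submersion near $\SO$, its components can be completed to a local coordinate system in which $\SO$ is precisely the zero set $\{g=0\}$; consequently the entries of $g(R)$ serve as transverse coordinates, and distance to $\SO$ is bounded above and below by constant multiples of $\|g(R)\|$ on a neighborhood. This is the same mechanism used in the proof of Corollary~\ref{corollary:construction:V}. Feeding the exponential bound on $\|g(R(t))\|$ through this comparison then yields exponential convergence of $(R(t),\Omega(t))$ to $\SO \times \mathbb R^3$, completing the argument.
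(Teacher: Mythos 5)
Your proposal is correct and follows essentially the same route as the paper's proof: pick $b,r$ from Lemma \ref{lemma:attractive:M:rigid}, invoke Theorem \ref{theorem:attractive:M} for invariance and the decay estimate $\|R^T(t)R(t)-I\|\le\|R^T(0)R(0)-I\|e^{-bt/2}$, and then use the regular-value property of $g(R)=R^TR-I$ to identify decay of $\|g(R)\|$ with exponential convergence to $\SO\times\mathbb R^3$. Your added explanation of why the components of $g$ serve as transverse coordinates only makes explicit what the paper leaves to the citation of Guillemin and Pollack.
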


\begin{remark}
The technique of  embedding into  ambient Euclidean space and transversal stabilization was successfully tested in creating feedback integrators for structure-preserving numerical integration\cite{ChJiPe16}  of the dynamics of uncontrolled dynamical systems.  This technique is extended  to control systems in this paper. In particular,  Theorem \ref{theorem:attractive:M}, Corollary \ref{corollary:construction:V},  Lemma \ref{lemma:attractive:M:rigid} and Theorem  \ref{theorem:attractive:M:rigid:exponential}  in this paper are new and powerful so as  to guarantee exponential stability of $M$ in the transversal direction.
\end{remark}

\subsection{Tracking via Linearization in Ambient Euclidean Space}

\subsubsection{Theory}
Consider again the system $\tilde \Sigma_{\mathbb R^n}$ given in \eqref{Sigma:tilde} and its restriction $\Sigma_M$ to $M$ given in \eqref{Sigma:M}. Choose a reference trajectory $x_0: [0,\infty) \rightarrow M$ for $\Sigma_M$ on $M$ driven by a control signal $u_0: [0,\infty) \rightarrow \mathbb R^k$ so that 
\[
\dot x_0(t) = \tilde X(x_0(t), u_0(t)) \quad \forall t\geq 0.
\]
We can then linearize the ambient system $\tilde \Sigma_{\mathbb R^n}$ along the trajectory $(x_0(t), u_0(t))$  in $\mathbb R^n$ as follows:
\begin{equation}\label{Sigma:tilde:ell:t}
\tilde \Sigma^\ell_{\mathbb R^n} : \quad  \Delta \dot x = A(t) \Delta x+  B(t)\Delta u,
\end{equation}
where 
\[
A(t) = \frac{\partial \tilde X}{\partial x}(x_0(t),u_0(t)), \quad B(t) = \frac{\partial \tilde X}{\partial u}(x_0(t),u_0(t)) 
\]
and
\[
\Delta x = x - x_0 (t) \in \mathbb R^n, \quad \Delta u = u - u_0 (t) \in \mathbb R^k.
\]
Refer to Section 4.6 of Khalil\cite{Kh02} about the linearization technique. Notice that the above linearization does not require any use of local charts on the state-space manifold $M$. In that sense the above linearization is conducted {\it globally} along the reference trajectory in one global coordinate system in $\mathbb R^n$.  Also, in comparison with such a geometric linearization method as variational linearization in Lee et al.\cite{LeLeMc11} our Jacobian linearization is straightforward and simple to carry out.  The following lemma is trivial but useful:
\begin{lemma}\label{lemma:exp:tracking}
If $u = u(t,x)$ is an exponentially tracking controller for the ambient system $\tilde \Sigma_{\mathbb R^n}$ for the reference trajectory $x_0(t)$, then it is also an exponentially tracking controller for the system $ \Sigma_M$ on $M$ for the same reference trajectory.
\end{lemma}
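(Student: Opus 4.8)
The plan is to exploit two structural facts already in hand for $\tilde \Sigma_{\mathbb R^n}$: that $\tilde X_e$ and $X$ agree on $M$ by \eqref{tilde:Xe:X}, and that $M$ is an invariant manifold of $\tilde \Sigma_{\mathbb R^n}$. The essential point is that both properties survive the substitution of a feedback $u = u(t,x)$. First I would check that $M$ remains invariant under the closed-loop ambient dynamics $\dot x = \tilde X_e(x, u(t,x))$: for any $x \in M$ and any control value, $\tilde X_e(x,u(t,x)) = X(x,u(t,x))$ by \eqref{tilde:Xe:X} and $X(x,u(t,x)) \in T_xM$ by \eqref{X:in:TM}, so the closed-loop vector field stays tangent to $M$ along $M$.

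Next, for an initial condition $x(0) \in M$, I would argue that the closed-loop ambient trajectory coincides with the closed-loop trajectory of $\Sigma_M$ driven by the same feedback. By the invariance just noted, the ambient trajectory $x(t)$ stays on $M$ for all $t$, and along $M$ it satisfies $\dot x = \tilde X_e(x,u(t,x)) = X(x,u(t,x))$, which is precisely the closed-loop equation of $\Sigma_M$. Uniqueness of solutions of the ordinary differential equation then identifies this $M$-valued curve as the unique closed-loop $\Sigma_M$ trajectory through $x(0)$. In particular the reference $x_0(t)$, which lies on $M$, is a trajectory of both closed loops, so it is a legitimate reference for $\Sigma_M$ as well.

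Finally I would transfer the exponential estimate. By hypothesis there exist constants $c,\alpha > 0$ and a neighborhood $N$ of $x_0(0)$ in $\mathbb R^n$ such that every ambient closed-loop trajectory with $x(0) \in N$ obeys $\|x(t) - x_0(t)\| \le c\,\|x(0) - x_0(0)\|\,e^{-\alpha t}$. Restricting to $x(0) \in N \cap M$, which is a relative neighborhood of $x_0(0)$ in $M$, the previous step identifies $x(t)$ with the corresponding $\Sigma_M$ trajectory, while the Euclidean norm used above is exactly the tracking error measured in the ambient space that $M$ inherits. Hence the same decay estimate holds verbatim for $\Sigma_M$, which is the claimed exponential tracking on $M$.

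The only point requiring attention — and the reason the lemma is \emph{trivial} rather than vacuous — is the bookkeeping of definitions: one must confirm that $x_0(t)$ is simultaneously a reference trajectory for both systems (guaranteed by \eqref{tilde:Xe:X} because $x_0(t) \in M$) and that the metric defining exponential tracking on $M$ is the restriction of the ambient Euclidean norm. No genuine analytic obstacle appears, since invariance together with ODE uniqueness realizes the closed-loop $\Sigma_M$ trajectories as a literal sub-collection of the closed-loop $\tilde \Sigma_{\mathbb R^n}$ trajectories, on which the hypothesized estimate already holds.
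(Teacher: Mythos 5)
Your proof is correct and follows exactly the restriction argument the paper has in mind: the paper itself declares this lemma ``trivial'' and supplies no proof, relying implicitly on the invariance of $M$ under the closed loop (via \eqref{tilde:Xe:X} and \eqref{X:in:TM}) and on the fact that $\Sigma_M$ trajectories starting on $M$ are precisely the ambient closed-loop trajectories starting on $M$. Your write-up makes that tacit reasoning explicit, including the one genuinely worthwhile bookkeeping point that the tracking metric on $M$ is the restricted ambient Euclidean norm.
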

The following theorem is an adaptation of Theorem 4.13 from the textbook by  Khalil\cite{Kh02}  in combination with Lemma \ref{lemma:exp:tracking} above. 
\begin{theorem}\label{theorem:Khalil}
Suppose that a linear feedback controller   $\Delta u = - K(t)\Delta x$ exponentially stabilizes the origin for the linearized system $\tilde \Sigma_{\mathbb R^n}^\ell$ in $\mathbb R^n$.  Let $B_r = \{ z \in \mathbb R^n \mid \| z\| <r\}$ for some $r>0$ and $f:  [0,\infty)  \times B_r \rightarrow \mathbb R$ be a function defined by
\[
f(t,z) = \tilde X( x_0(t) + z, u_0(t) - K(t) z) - \tilde X(x_0(t),u_0(t)).
\]
If the derivative $\frac{\partial f}{\partial z}(t,z)$ is bounded and Lipschitz on $B_r$ uniformly in $t$, then the controller
\[
u(t,x) = u_0(t)  - K(t) (x - x_0(t))
\]
enables the system $\Sigma_M$ on $M$  to track the reference trajectory $x_0(t)$ exponentially. 

\end{theorem}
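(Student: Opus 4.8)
The plan is to convert the tracking problem for the ambient system $\tilde\Sigma_{\mathbb R^n}$ into an equilibrium-stabilization problem in error coordinates, invoke Theorem 4.13 of Khalil\cite{Kh02} to obtain exponential stability of that equilibrium, and then descend to $M$ via Lemma \ref{lemma:exp:tracking}.

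First I would introduce the error coordinate $z = x - x_0(t)$. Substituting the proposed controller $u(t,x) = u_0(t) - K(t)(x - x_0(t)) = u_0(t) - K(t)z$ into $\tilde\Sigma_{\mathbb R^n}$ and using $\dot x_0(t) = \tilde X(x_0(t), u_0(t))$, a direct computation shows that the closed-loop error dynamics are exactly
\[
\dot z = \tilde X(x_0(t)+z,\, u_0(t)-K(t)z) - \tilde X(x_0(t),u_0(t)) = f(t,z),
\]
the function appearing in the statement. In particular $f(t,0)=0$ for all $t$, so $z=0$ is an equilibrium of the error system, and exponential tracking of $x_0(t)$ by $\tilde\Sigma_{\mathbb R^n}$ is equivalent to exponential stability of $z=0$ for $\dot z = f(t,z)$.

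Next I would compute the Jacobian of $f$ at the origin. By the chain rule,
\[
\frac{\partial f}{\partial z}(t,0) = \frac{\partial \tilde X}{\partial x}(x_0(t), u_0(t)) - \frac{\partial \tilde X}{\partial u}(x_0(t), u_0(t))\, K(t) = A(t) - B(t)K(t),
\]
which is precisely the system matrix of the closed-loop linearization $\tilde\Sigma_{\mathbb R^n}^\ell$ driven by $\Delta u = -K(t)\Delta x$. By hypothesis this closed-loop linear system is exponentially stable. Combined with the assumed boundedness and uniform-in-$t$ Lipschitz continuity of $\partial f/\partial z$ on $B_r$, every hypothesis of Theorem 4.13 in Khalil is met, so that theorem yields exponential stability of $z=0$ for $\dot z = f(t,z)$; equivalently, $\tilde\Sigma_{\mathbb R^n}$ tracks $x_0(t)$ exponentially under the given controller. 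Finally I would apply Lemma \ref{lemma:exp:tracking} to transfer this exponential tracking property from the ambient system $\tilde\Sigma_{\mathbb R^n}$ to the restricted system $\Sigma_M$ on $M$.

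Since the result is an adaptation of a standard indirect-Lyapunov theorem, I do not anticipate a genuinely hard obstacle. The two points requiring care are the correct identification of $\partial f/\partial z(t,0)$ with the closed-loop matrix $A(t)-B(t)K(t)$ through the chain rule, and the verification that the error-coordinate change $z = x - x_0(t)$ turns the tracking problem into a bona fide origin-stabilization problem so that Khalil's hypotheses apply verbatim; the remaining regularity conditions are supplied directly by the assumption on $\partial f/\partial z$.
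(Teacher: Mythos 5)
Your proposal is correct and follows exactly the route the paper intends: the paper gives no detailed proof, stating only that the theorem is an adaptation of Theorem 4.13 of Khalil combined with Lemma \ref{lemma:exp:tracking}, and your error-coordinate reduction, identification of $\partial f/\partial z(t,0)$ with $A(t)-B(t)K(t)$, and final descent to $M$ are precisely the steps that adaptation consists of. No discrepancy to report.
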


Notice that the key point in the above theorem is that the controller for the system $\Sigma_M$ on $M$ is designed in the ambient Euclidean space $\mathbb R^n$.

\subsubsection{Application to the Rigid Body System}
We here apply Theorem \ref{theorem:Khalil} to the free rigid body system \eqref{rigid:tilde:eq}. Take a reference trajectory $(R_0(t), \Omega_0 (t)) \in \SO \times \mathbb R^3$ and the corresponding control signal $u_0(t)$ such that 
\begin{equation}\label{ref:traj:rigid}
\dot R_0(t) = R_0(t)\hat \Omega_0 (t), \quad \dot \Omega_0 (t) = u_0(t), \,\, \forall t \geq 0,
\end{equation}
which can be also understood as equations that define $\Omega_0(t)$ and $u_0(t)$ in terms of $R_0(t)$ and its time derivatives. Assume that  $(R_0(t), \Omega_0(t))$ and $u_0(t)$ are bounded  over the time interval $[0,\infty )$.
\begin{theorem}
The linearization of \eqref{rigid:tilde:eq} along the reference trajectory $(R_0(t),\Omega_0(t)) \in \SO \times \mathbb R^3$ and the reference control signal $u_0(t)$ is given by
\begin{subequations}\label{rigid:tilde:ell}
\begin{align}
\Delta \dot R &= \Delta R \hat\Omega_0+ R_0 \widehat{\Delta \Omega} - 2k_e R_0 \Sym (R_0^T\Delta R), \label{rigid:tilde:ell:a}\\
\Delta\dot \Omega &= \Delta u, \label{rigid:tilde:ell:b}
\end{align}
\end{subequations}
where
\[
\Delta R = R - R_0(t) \in \mathbb R^{3\times 3}, \quad \Delta \Omega = \Omega - \Omega_0(t) \in \mathbb R^3, \quad  \Delta u = u - u_0(t) \in \mathbb R^3.
\]
\begin{proof}
Equation \eqref{rigid:tilde:ell:a} can be easily derived by using the definition of derivative as follows. Let $c(s) = R_0 + s (R-R_0) = R_0 + s\Delta R$ and $d(s) = \Omega_0 + s (\Omega - \Omega_0) = \Omega_0 + s\Delta \Omega$, where $s\in \mathbb R$. Then
\begin{align*}
\left . \frac{d}{ds} \right |_{s=0}  (c(s)\widehat {d(s)} - kc(s)(c(s)^Tc(s) - I)) &=\Delta R \hat\Omega_0+ R_0 \widehat{\Delta \Omega}  - k_eR_0 (\Delta R^T R_0 + R_0^T\Delta R)\\
 &= \Delta R \hat\Omega_0+ R_0 \widehat{\Delta \Omega}  - 2k_eR_0\Sym(R_0^T\Delta R),
\end{align*}
which is equal to the expression on the right side of  \eqref{rigid:tilde:ell:a}.
\end{proof}

\end{theorem}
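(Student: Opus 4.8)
The plan is to linearize the ambient modified dynamics \eqref{rigid:tilde:eq} directly in the single global Cartesian coordinate system on $\mathbb R^{3\times 3}\times\mathbb R^3$, exactly as prescribed by \eqref{Sigma:tilde:ell:t}. Because the right-hand side of \eqref{rigid:tilde:eq} is polynomial in the entries of $R$ and linear in $\Omega$ and $u$, its Jacobian exists everywhere and can be read off by computing a directional derivative along the straight-line variation $R_0+s\,\Delta R$, $\Omega_0+s\,\Delta\Omega$, $u_0+s\,\Delta u$ and setting $s=0$. No charts on $\SO$ are needed, which is the whole point of working in the ambient space.

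First I would dispose of the trivial row. Since the right-hand side of \eqref{Omega:s:eq} is simply $u$, its partial derivatives in $R$ and $\Omega$ vanish and its partial derivative in $u$ is the identity, so the second equation linearizes immediately to $\Delta\dot\Omega=\Delta u$, which is \eqref{rigid:tilde:ell:b}. For the first row I would split the right-hand side of \eqref{R:s:eq} into the kinematic term $R\hat\Omega$ and the transversal term $-k_e R(R^TR-I)$ and differentiate them separately. The kinematic term is bilinear in $(R,\Omega)$, so, using the linearity of the hat map, its directional derivative at $(R_0,\Omega_0)$ is $\Delta R\,\hat\Omega_0+R_0\,\widehat{\Delta\Omega}$.

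The only substantive computation is the derivative of the cubic transversal term. Writing $g(R)=R(R^TR-I)$ and differentiating along $R_0+s\,\Delta R$ gives
\[
\left.\frac{d}{ds}\right|_{s=0} g(R_0+s\,\Delta R)=\Delta R\,(R_0^TR_0-I)+R_0\bigl(\Delta R^TR_0+R_0^T\Delta R\bigr).
\]
Here the hypothesis $R_0(t)\in\SO$ does the essential work: $R_0^TR_0=I$ makes the first summand vanish, and the two remaining terms assemble, through the definition $\Sym(A)=\frac{1}{2}(A+A^T)$, into $2R_0\,\Sym(R_0^T\Delta R)$. Multiplying by $-k_e$ and adding the kinematic contribution then reproduces \eqref{rigid:tilde:ell:a}.

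The point I would watch most carefully is exactly this use of $R_0\in\SO$. Even though $\nabla\tilde V$ vanishes identically on $M=\SO\times\mathbb R^3$, its Jacobian does not, and the surviving term $-2k_eR_0\,\Sym(R_0^T\Delta R)$ is precisely the transversal restoring term whose symmetric structure (recall $\langle\Sym(\cdot),\Skew(\cdot)\rangle=0$) supplies stability normal to $M$. The main obstacle is therefore bookkeeping rather than depth: I must apply the orthogonality relations $R_0^TR_0=R_0R_0^T=I$ correctly and keep the factor of $2$ from symmetrization straight. Neither is hard, but both are easy to mishandle.
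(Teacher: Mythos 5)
Your proposal is correct and follows essentially the same route as the paper: a directional (Gateaux) derivative of the right-hand side of \eqref{rigid:tilde:eq} along the straight-line variation $R_0+s\,\Delta R$, $\Omega_0+s\,\Delta\Omega$ in the ambient space, using $R_0^TR_0=I$ to kill the $\Delta R\,(R_0^TR_0-I)$ term and assembling the remainder into $-2k_eR_0\Sym(R_0^T\Delta R)$. You in fact make explicit the use of $R_0\in\SO$ that the paper leaves implicit, which is a welcome clarification rather than a deviation.
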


We now introduce a new matrix variable $Z$ replacing $\Delta R$ as follows:
\begin{equation}\label{ZR0R}
Z = R_0^T(t) \Delta R.
\end{equation}
 Let
\begin{equation}\label{def:Zs}
Z_s = \Sym (Z), \quad Z_k=\Skew (Z)
\end{equation}
such that 
\begin{equation}\label{def:Zk}
Z = Z_s + Z_k.
\end{equation}

\begin{lemma}
The system \eqref{rigid:tilde:ell} is transformed to
\begin{subequations}\label{Zs:Zk:Om:tv}
\begin{align}
\dot Z_s &= [Z_s, \hat \Omega_0] - 2k_e Z_s,\label{Zs:Zk:Om:tv:a}\\
\dot Z_k^\vee &= Z_k^\vee \times \Omega_0 + \Delta \Omega,  \label{Zs:Zk:Om:tv:b}\\
\Delta \dot \Omega &= \Delta u \label{Zs:Zk:Om:tv:c}
\end{align}
\end{subequations}
via the state transformation given in \eqref{ZR0R} -- \eqref{def:Zk}.
\end{lemma}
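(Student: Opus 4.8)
The plan is to differentiate the new variable $Z = R_0^T(t)\,\Delta R$ directly, substitute the linearized dynamics \eqref{rigid:tilde:ell:a}, and then split the resulting matrix ODE into its symmetric and skew-symmetric parts, which by definition \eqref{def:Zs} are exactly $\dot Z_s$ and $\dot Z_k$. Since \eqref{Zs:Zk:Om:tv:c} is nothing but \eqref{rigid:tilde:ell:b} restated, the whole task reduces to deriving \eqref{Zs:Zk:Om:tv:a} and \eqref{Zs:Zk:Om:tv:b} from a single expression for $\dot Z$.

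First I would apply the product rule to obtain $\dot Z = \dot R_0^T \Delta R + R_0^T \Delta \dot R$. Using the reference kinematics $\dot R_0 = R_0\hat\Omega_0$ from \eqref{ref:traj:rigid} together with the skew-symmetry of $\hat\Omega_0$ gives $\dot R_0^T = -\hat\Omega_0 R_0^T$, so the first term collapses to $-\hat\Omega_0 Z$. For the second term I substitute \eqref{rigid:tilde:ell:a} and repeatedly use $R_0^T R_0 = I$ (valid because $R_0 \in \SO$), which reduces the right-hand side to $Z\hat\Omega_0 + \widehat{\Delta\Omega} - 2k_e Z_s$. Adding the two contributions yields the compact matrix equation
\[
\dot Z = [Z,\hat\Omega_0] + \widehat{\Delta\Omega} - 2k_e Z_s,
\]
where $[Z,\hat\Omega_0] = Z\hat\Omega_0 - \hat\Omega_0 Z$.

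Next I would project this equation onto $\Sym(\mathbb R^{3\times 3})$ and $\Skew(\mathbb R^{3\times 3})$. Writing $Z = Z_s + Z_k$ and invoking the bracket inclusions established in the preliminaries, namely $[\Sym(\mathbb R^{3\times 3}),\Skew(\mathbb R^{3\times 3})]\subset\Sym(\mathbb R^{3\times 3})$ and $[\Skew(\mathbb R^{3\times 3}),\Skew(\mathbb R^{3\times 3})]\subset\Skew(\mathbb R^{3\times 3})$, the term $[Z_s,\hat\Omega_0]$ is symmetric while $[Z_k,\hat\Omega_0]$ is skew. Since $\widehat{\Delta\Omega}$ is skew and $-2k_e Z_s$ is symmetric, the symmetric part of $\dot Z$ reads $\dot Z_s = [Z_s,\hat\Omega_0] - 2k_e Z_s$, which is \eqref{Zs:Zk:Om:tv:a}, and the skew part reads $\dot Z_k = [Z_k,\hat\Omega_0] + \widehat{\Delta\Omega}$.

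Finally, to pass to the vector form \eqref{Zs:Zk:Om:tv:b} I would apply the vee map to the skew equation. Writing $Z_k = \widehat{Z_k^\vee}$ and using Lemma \ref{lemma:prelim}(4) in the form $[\widehat{Z_k^\vee},\hat\Omega_0] = (Z_k^\vee \times \Omega_0)^\wedge$, the vee of the commutator is $Z_k^\vee \times \Omega_0$, while the vee of $\widehat{\Delta\Omega}$ is $\Delta\Omega$, giving $\dot Z_k^\vee = Z_k^\vee \times \Omega_0 + \Delta\Omega$. The computation is largely routine; the only points requiring care are the sign in $\dot R_0^T$ and the observation that the $\Sym$/$\Skew$ decomposition is respected by the relevant commutators, which is precisely what makes the two blocks \eqref{Zs:Zk:Om:tv:a} and \eqref{Zs:Zk:Om:tv:b} decouple cleanly.
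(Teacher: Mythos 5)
Your proposal is correct and follows essentially the same route as the paper: differentiate $Z=R_0^T\Delta R$, use $\dot R_0^T=-\hat\Omega_0 R_0^T$ to collapse $\dot Z$ to $[Z,\hat\Omega_0]+\widehat{\Delta\Omega}-2k_eZ_s$, split into symmetric and skew parts, and apply the vee map with Lemma \ref{lemma:prelim}(4). The only cosmetic difference is that you explicitly cite the commutator inclusions from the preliminaries to justify the $\Sym$/$\Skew$ decoupling, which the paper leaves implicit.
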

\begin{proof}
Differentiate  \eqref{ZR0R} with respect to $t$ and use   \eqref{ref:traj:rigid} -- \eqref{def:Zk} to obtain
\begin{align*}
\dot Z &= \dot R_0^T \Delta R + R_0^T\Delta \dot R \\
&= -\hat\Omega_0R_0^T\Delta R + R_0^T \Delta R \hat \Omega_0 + \widehat {\Delta \Omega} - 2k_e\Sym(R_0^T\Delta R)\\
&= [Z, \hat \Omega_0] + \widehat {\Delta \Omega} - 2k_e\Sym(Z)\\
&= [Z_s,  \hat \Omega_0]  + [Z_k,  \hat \Omega_0] + \widehat {\Delta \Omega} - 2k_e Z_s.
\end{align*}
Taking the symmetric and skew-symmetric parts, we get
\begin{align*}
\dot Z_s =  [Z_s,  \hat \Omega_0]  - 2k_e Z_s,\quad \dot Z_k =   [Z_k,  \hat \Omega_0] + \widehat {\Delta \Omega},
\end{align*}
where the second equation can be also written as \eqref{Zs:Zk:Om:tv:b} by Lemma \ref{lemma:prelim}.
This completes the proof. 
\end{proof}

\begin{proposition}\label{proposition:delta:1}
For any two matrices $K_P, K_D \in \mathbb R^{3\times 3}$ such that the matrix 
\begin{equation}\label{point:stab:Hurwitz:2}
\begin{bmatrix}
0 & I \\
-K_P & -K_D
\end{bmatrix}
\end{equation}
 is Hurwitz,  the controller
\begin{align}
\Delta u &= -K_P \cdot Z_k^\vee - K_D (Z_k^\vee \times \Omega_0 + \Delta \Omega )  - (Z_k^\vee \times \Omega_0 + \Delta \Omega )\times \Omega_0 - Z_k^\vee \times u_0 \label{Delta:u:rigid}
\end{align}
exponentially stabilizes the origin for the system \eqref{Zs:Zk:Om:tv}. 
\begin{proof}
Let us first show the exponential stability of the subsystem \eqref{Zs:Zk:Om:tv:a} that is decoupled from the rest of the system. Let $V(Z_s) = \|Z_s\|^2/2$. Along the trajectory of  \eqref{Zs:Zk:Om:tv}, $\frac{d}{dt}V = \langle Z_s, [Z_s, \hat\Omega_0]  - 2k_eZ_s\rangle = -2k_e \|Z_s\|^2 = -4k_eV$, where it is easy to show $\langle Z_s, [Z_s, \hat\Omega_0]  \rangle = 0$. Hence, $V(t) \leq e^{-4k_et}V(0)$ for all $t\geq 0$, or 
\begin{equation}\label{Zs:exp}
\|Z_s(t)\| \leq e^{-2k_et} \|Z_s(0)\|
\end{equation} 
for all $t\geq 0$ and $Z_s(0) \in \Sym(\mathbb R^{3\times 3})$, which proves exponential stability of $Z_s = 0$ for  \eqref{Zs:Zk:Om:tv:a}.

Differentiating \eqref{Zs:Zk:Om:tv:b} and substituting \eqref{Zs:Zk:Om:tv:c}  transforms the subsystem \eqref{Zs:Zk:Om:tv:b} and \eqref{Zs:Zk:Om:tv:c} to the following second-order system:
\[
\ddot Z_k^\vee = \dot Z_k^\vee \times \Omega_0 +Z _k^\vee \times u_0 + \Delta u
\]
since $\dot \Omega (t) = u_0(t)$. This second-order system
 is exponentially stabilized by the controller
\begin{equation}\label{Delta:u:another:form}
\Delta u = -K_P \cdot Z_k^\vee - K_D \dot Z_k^\vee -  \dot Z_k^\vee \times \Omega_0 - Z_k^\vee \times u_0,
\end{equation}
where the matrices $K_P, K_D \in \mathbb R^{3\times 3}$ are any matrices such that the matrix in \eqref{point:stab:Hurwitz:2} becomes Hurwitz. So, there are positive constants $C_1$ and $C_2$ such that 
\[
\|Z_k^\vee (t)\| + \|\dot Z_k^\vee (t)\| \leq C_1e^{-C_2t} (\|Z_k^\vee (0)\| + \|\dot Z_k^\vee (0)\| )
\]
for all $t\geq 0$ and $(Z_k^\vee (0), \dot Z_k^\vee (0)) \in \mathbb R^{3} \times  \mathbb R^{3}$.
Since $\Omega_0(t)$ is bounded by assumption, there is a constant $M>0$ such that $ \|\Omega_0(t)\| \leq M$ for all $t\geq0$. By  \eqref{Zs:Zk:Om:tv:b} and the triangle inequality,
\[
\|\dot Z_k^\vee (t) \| \leq   M\|Z_k^\vee (t)\| + \| \Delta \Omega (t)\|
\]
and
\[
 \|\Delta \Omega (t)\| \leq \| \dot Z_k^\vee (t)\| + M \|Z_k^\vee (t)\| 
\]
for all $t\geq 0$. It is then easy to show that
\begin{align}
\|Z_k^\vee (t)\| &+ \|\Delta \Omega (t)\|\leq C_3e^{-C_2t}(\|Z_k^\vee (0)\| + \|\ \Delta \Omega (0)\|)  \label{Zk:Om:exp}
\end{align}
for all $t\geq 0$ and $(Z_k^\vee (0), \Delta \Omega (0))\in \mathbb R^{3} \times  \mathbb R^{3}$, where $C_3 = C_1(1+M)^2$. Notice that the controller given in \eqref{Delta:u:another:form} is the same as the one in \eqref{Delta:u:rigid}. It  follows  
from \eqref{Zs:exp} and \eqref{Zk:Om:exp} that the controller  \eqref{Delta:u:rigid} exponentially stabilizes the origin for the system \eqref{Zs:Zk:Om:tv}. 
\end{proof}
\end{proposition}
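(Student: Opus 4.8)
The plan is to exploit the decoupled structure of system \eqref{Zs:Zk:Om:tv}: the $Z_s$-equation \eqref{Zs:Zk:Om:tv:a} is autonomous and carries no control, whereas the given controller acts only on the $(Z_k^\vee,\Delta\Omega)$-pair through \eqref{Zs:Zk:Om:tv:c} and depends on none of $Z_s$. I would therefore establish exponential stability of each block separately and then combine the two estimates.

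First, for the symmetric part, I would take the Lyapunov candidate $V(Z_s) = \tfrac12\|Z_s\|^2$ and differentiate along \eqref{Zs:Zk:Om:tv:a}. The key cancellation is $\langle Z_s, [Z_s,\hat\Omega_0]\rangle = 0$: since $Z_s = Z_s^T$, expanding the inner product as a trace and invoking the cyclic property makes the commutator term vanish. This leaves $\dot V = -2k_e\|Z_s\|^2 = -4k_e V$, so $V$ decays at a fixed exponential rate independent of the time-varying $\Omega_0(t)$, yielding $\|Z_s(t)\| \le e^{-2k_e t}\|Z_s(0)\|$. This is precisely the payoff of the transversal-stabilization term $-2k_e Z_s$ inherited from \eqref{rigid:tilde:eq}.

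Second, and this is where the choice of controller does the real work, I would reduce the $(Z_k^\vee,\Delta\Omega)$ block to a second-order equation. Writing $\xi = Z_k^\vee$ and differentiating \eqref{Zs:Zk:Om:tv:b} gives $\ddot\xi = \dot\xi\times\Omega_0 + \xi\times\dot\Omega_0 + \Delta\dot\Omega$; substituting $\dot\Omega_0 = u_0$ from \eqref{ref:traj:rigid} and $\Delta\dot\Omega = \Delta u$ from \eqref{Zs:Zk:Om:tv:c} gives $\ddot\xi = \dot\xi\times\Omega_0 + \xi\times u_0 + \Delta u$. The crucial observation is that the controller of the statement, rewritten via $\dot\xi = \xi\times\Omega_0 + \Delta\Omega$ into the form \eqref{Delta:u:another:form}, is engineered to cancel exactly the two time-varying cross terms $\dot\xi\times\Omega_0$ and $\xi\times u_0$, collapsing the dynamics to the \emph{linear time-invariant} system $\ddot\xi + K_D\dot\xi + K_P\xi = 0$, equivalently $\tfrac{d}{dt}(\xi,\dot\xi)^T$ governed by the matrix in \eqref{point:stab:Hurwitz:2}. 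Since that matrix is Hurwitz by hypothesis, standard LTI theory gives $\|\xi(t)\| + \|\dot\xi(t)\| \le C_1 e^{-C_2 t}(\|\xi(0)\| + \|\dot\xi(0)\|)$.

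The remaining step is bookkeeping: I must translate the decay of $(\xi,\dot\xi)$ back into decay of the genuine states $(Z_k^\vee,\Delta\Omega)$, since $\Delta\Omega = \dot\xi - \xi\times\Omega_0$ differs from $\dot\xi$. Using boundedness $\|\Omega_0(t)\| \le M$, the triangle inequality, and $\|a\times b\|\le\|a\|\|b\|$, I would bound $\|\Delta\Omega\|$ by $\|\dot\xi\| + M\|\xi\|$ and conversely $\|\dot\xi\|$ by $\|\Delta\Omega\| + M\|\xi\|$, converting the $(\xi,\dot\xi)$ estimate into $\|Z_k^\vee\| + \|\Delta\Omega\| \le C_3 e^{-C_2 t}(\|Z_k^\vee(0)\| + \|\Delta\Omega(0)\|)$; combining with the $Z_s$ estimate then gives exponential stability of the origin for the full system \eqref{Zs:Zk:Om:tv}. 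The point I would be most careful about is verifying that the feedback truly kills \emph{all} the time dependence in the second-order equation — this is what turns an a priori linear \emph{time-varying} problem, for which Hurwitzness of a frozen matrix would not suffice, into a clean LTI problem, and it hinges on the reference identity $\dot\Omega_0 = u_0$.
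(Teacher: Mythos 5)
Your proposal is correct and follows essentially the same route as the paper's proof: a Lyapunov argument with $V=\tfrac12\|Z_s\|^2$ for the decoupled symmetric block, reduction of the $(Z_k^\vee,\Delta\Omega)$ block to the second-order equation $\ddot Z_k^\vee = \dot Z_k^\vee\times\Omega_0 + Z_k^\vee\times u_0 + \Delta u$, cancellation of the time-varying cross terms to obtain the LTI system governed by the Hurwitz matrix in \eqref{point:stab:Hurwitz:2}, and the same triangle-inequality bookkeeping to convert the $(Z_k^\vee,\dot Z_k^\vee)$ estimate into one for $(Z_k^\vee,\Delta\Omega)$. No gaps; your explicit use of $\dot\Omega_0 = u_0$ in the differentiation step is exactly the identity the paper relies on.
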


\begin{remark}
The exponential stability of the subsystem \eqref{Zs:Zk:Om:tv:a} is a consequence of adding the term $- k_eR(R^TR - I)$ in \eqref{R:s:eq}, and it is  consistent with Theorem \ref{theorem:attractive:M:rigid:exponential}.
\end{remark}

The following proposition produces time-varying PID-like tracking controllers. 
\begin{proposition}\label{proposition:delta:2}
For any three matrices $K_P, K_D, K_I \in \mathbb R^{3\times 3}$ such that the polynomial 
\begin{equation}\label{PID:poly:2}
\det (\lambda^3 I + \lambda^2K_D + \lambda K_P + K_I) = 0
\end{equation}
is Hurwitz,  the controller
\begin{align}
\Delta u &= -K_P \cdot Z_k^\vee - K_D (Z_k^\vee \times \Omega_0 + \Delta \Omega )  - K_I\int_0^t   Z_k^\vee (\tau) d\tau  - (Z_k^\vee \times \Omega_0 + \Delta \Omega )\times \Omega_0 - Z_k^\vee \times u_0 \label{Delta:u:rigid:PID}
\end{align}
exponentially stabilizes the origin for the  system \eqref{Zs:Zk:Om:tv}. 
 \begin{proof}
Apply the controller  \eqref{Delta:u:rigid:PID} to the system \eqref{Zs:Zk:Om:tv}  and differentiate \eqref{Zs:Zk:Om:tv:b} three times to transform the closed-loop system \eqref{Zs:Zk:Om:tv} to 
\begin{align*}
\dot Z_s &= -2k_e Z_s,\\
\dddot Z_k^\vee &+ K_D \ddot Z_k^\vee + K_P\dot Z_k^\vee + K_I  Z_k^\vee = 0.
\end{align*}
It is easy to prove that this linear system is exponentially stable by the Hurwitz condition on the polynomial in \eqref{PID:poly:2}.
This proves  the proposition.
\end{proof}

\end{proposition}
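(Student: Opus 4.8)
The plan is to mirror the structure of the proof of Proposition~\ref{proposition:delta:1}: I would treat the decoupled $Z_s$ subsystem separately and reduce the remaining dynamics to a constant-coefficient linear ODE whose characteristic polynomial is precisely \eqref{PID:poly:2}. Since the subsystem \eqref{Zs:Zk:Om:tv:a} does not involve the control input and is identical to the one analyzed in Proposition~\ref{proposition:delta:1}, the bound \eqref{Zs:exp} already yields $\|Z_s(t)\| \le e^{-2k_e t}\|Z_s(0)\|$, so $Z_s$ decays exponentially and I may concentrate entirely on the $(Z_k^\vee, \Delta\Omega)$ subsystem \eqref{Zs:Zk:Om:tv:b}--\eqref{Zs:Zk:Om:tv:c}.

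First I would augment the state with the integrator variable $w(t) = \int_0^t Z_k^\vee(\tau)\,d\tau$, so that $\dot w = Z_k^\vee$ with $w(0)=0$. Differentiating \eqref{Zs:Zk:Om:tv:b} once in $t$ and using $\Delta\dot\Omega = \Delta u$ from \eqref{Zs:Zk:Om:tv:c} together with $\dot\Omega_0 = u_0$ from \eqref{ref:traj:rigid} gives the second-order relation $\ddot{Z}_k^\vee = \dot Z_k^\vee \times \Omega_0 + Z_k^\vee \times u_0 + \Delta u$. The key computational step is then to insert the controller \eqref{Delta:u:rigid:PID}, rewritten via $\dot Z_k^\vee = Z_k^\vee \times \Omega_0 + \Delta\Omega$, and to observe that the two feedforward cross terms $-\dot Z_k^\vee \times \Omega_0$ and $-Z_k^\vee \times u_0$ carried by the controller cancel exactly the two nonautonomous terms produced by the differentiation. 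What survives is the constant-coefficient identity $\ddot Z_k^\vee + K_D \dot Z_k^\vee + K_P Z_k^\vee + K_I w = 0$.

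Because $Z_k^\vee = \dot w$, this is equivalent to the third-order linear system $\dddot w + K_D \ddot w + K_P \dot w + K_I w = 0$, whose characteristic polynomial is exactly \eqref{PID:poly:2} and is Hurwitz by hypothesis. Consequently $w$ and its derivatives $\dot w = Z_k^\vee$ and $\ddot w = \dot Z_k^\vee$ all decay exponentially for every initial condition. To recover the decay of the physical velocity error I would solve \eqref{Zs:Zk:Om:tv:b} for $\Delta\Omega = \dot Z_k^\vee - Z_k^\vee \times \Omega_0$; using the assumed boundedness of $\Omega_0$, this shows $\Delta\Omega$ also decays exponentially, in the spirit of the estimate \eqref{Zk:Om:exp}. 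Combining this with the exponential decay of $Z_s$ delivers exponential stability of the origin of \eqref{Zs:Zk:Om:tv}.

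The main obstacle I anticipate is purely the bookkeeping of the cross-product cancellation: one must expand the controller \eqref{Delta:u:rigid:PID} in the correct variables and confirm that the nonautonomous terms involving $\Omega_0$ and $u_0$ vanish identically, since this exact cancellation is what renders the closed loop time-invariant and is the very reason the Hurwitz test \eqref{PID:poly:2} is the right condition. A secondary point to handle carefully is the meaning of \emph{exponential stability of the origin}: because the integrator is initialized at $w(0)=0$, the augmented trajectory starts on the slice $w=0$, and it is the exponential decay of $(\dot w, \ddot w)$, rather than of $w$ alone, that yields the claimed decay of $(Z_k^\vee, \Delta\Omega)$.
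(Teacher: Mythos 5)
Your proposal is correct and takes essentially the same route as the paper: the paper likewise differentiates \eqref{Zs:Zk:Om:tv:b}, lets the feedforward cross terms in \eqref{Delta:u:rigid:PID} cancel the $\Omega_0$- and $u_0$-dependent terms, and reduces the closed loop to the constant-coefficient third-order equation $\dddot Z_k^\vee + K_D \ddot Z_k^\vee + K_P \dot Z_k^\vee + K_I Z_k^\vee = 0$, concluding by the Hurwitz condition \eqref{PID:poly:2}. Your extra care with the integrator state $w$, its initialization, and the recovery of $\Delta\Omega$ via \eqref{Zs:Zk:Om:tv:b} only makes explicit steps the paper leaves implicit.
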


The controllers proposed in \eqref{Delta:u:rigid} and \eqref{Delta:u:rigid:PID}  depend on the reference control signal $u_0(t)$. The following proposition  proposes one that is independent of $u_0(t)$.
\begin{proposition}\label{proposition:delta:3}
For any positive number $k_P$ and any positive definite symmetric matrix $K_D \in \mathbb R^{3\times 3}$, the  controller
\begin{equation}\label{delta:u:kpd}
\Delta u = -k_PZ_k^\vee - K_D \Delta \Omega
\end{equation}
exponentially stabilizes the origin for the  system \eqref{Zs:Zk:Om:tv}. 
\begin{proof}
Since the exponential stability of the subsystem \eqref{Zs:Zk:Om:tv:a} has been shown  in the proof of Proposition \ref{proposition:delta:1}, it remains to prove the exponential stability of the  subsystem \eqref{Zs:Zk:Om:tv:b} and \eqref{Zs:Zk:Om:tv:c} with the control law given above. 
Since $\Omega_0(t)$ is bounded by assumption, there is a number $M$ such that $\|\Omega_0(t)\| \leq M$ for all $t\geq 0$. Choose a number $\epsilon$ such that
\begin{equation}\label{eps:inequality}
0 < \epsilon < \min\left \{ \sqrt k_P,  \frac{4k_P\lambda_{\min} (K_D)}{4k_P + (M + \lambda_{\max} (K_D))^2} \right \}.
\end{equation}
Define two functions $V_1$ and $V_2$ by
\begin{align}
V_1 &= \frac{k_P}{2}\|Z_k^\vee\|^2 + \frac{1}{2}\|\Delta \Omega \|^2 + \epsilon \|Z_k^\vee\| \|\Omega\|,\label{def:V1:track} \\
V_2 &= \epsilon k_p\|Z_k^\vee\|^2 + (\lambda_{\min}(K_D) -\epsilon)\|\Delta \Omega\|^2- \epsilon (M+\lambda_{\max}(K_D))\|Z_k^\vee\|\|\Omega\|. \nonumber
\end{align}
These two functions are all positive definite quadratic functions of $(\|Z_k^\vee\|, \|\Omega\|)$ by \eqref{eps:inequality}, so there exists a constant $C>0$ such that
\begin{equation}\label{C2:V2:V3}
C V_1  \leq V_2.
\end{equation}
Define a function $V$ by
\begin{equation}\label{def:V:track}
V = \frac{k_P}{2} \|Z_k^\vee\|^2 + \frac{1}{2}\|\Delta \Omega\|^2 + \epsilon \langle Z_k^\vee,\Delta  \Omega  \rangle,
\end{equation}
which is a positive definite quadratic function of $(Z_k^\vee, \Delta \Omega)$ and satisfies
\begin{equation}\label{V1:V:V2}
 V \leq V_1.
\end{equation}
Along any trajectory of the subsystem \eqref{Zs:Zk:Om:tv:b} and \eqref{Zs:Zk:Om:tv:c} with the control \eqref{delta:u:kpd},
\begin{align*}
\frac{d}{dt}V &= k_P\langle Z_k^\vee, Z_k^\vee \times \Omega_0 + \Delta \Omega \rangle +\langle \Delta \Omega, u\rangle +\epsilon ( \langle Z_k^\vee \times \Omega_0 + \Delta \Omega, \Delta \Omega \rangle + \langle Z_k^\vee, u\rangle )\\
&\leq -\epsilon k_P \|Z_k^\vee \|^2 - (\lambda_{\min}(K_D) - \epsilon)\|\Delta \Omega\|^2  + \epsilon (M + \lambda_{\max}(K_D))\|Z_k^\vee\| \|\Delta\Omega\| \\
&=-V_2  \leq -CV_1 \leq -CV
\end{align*}
by \eqref{C2:V2:V3} and \eqref{V1:V:V2}. Hence, $V(t) \leq e^{-Ct}V(0)$ for all $t\geq 0$, which implies  that the closed-loop subsystem  \eqref{Zs:Zk:Om:tv:b} and \eqref{Zs:Zk:Om:tv:c}  is exponentially stable with the control \eqref{delta:u:kpd}. This completes the proof. 
\end{proof}
\end{proposition}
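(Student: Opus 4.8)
The plan is to exploit the block structure of the system \eqref{Zs:Zk:Om:tv}. The $Z_s$-subsystem \eqref{Zs:Zk:Om:tv:a} is decoupled from the rest, and its exponential stability was already established in the proof of Proposition \ref{proposition:delta:1}, giving the bound \eqref{Zs:exp}. Hence it suffices to prove exponential stability of the closed-loop $(Z_k^\vee,\Delta\Omega)$-subsystem obtained by substituting the controller \eqref{delta:u:kpd} into \eqref{Zs:Zk:Om:tv:b} and \eqref{Zs:Zk:Om:tv:c}, namely $\dot Z_k^\vee = Z_k^\vee \times \Omega_0 + \Delta\Omega$ and $\Delta\dot\Omega = -k_P Z_k^\vee - K_D\Delta\Omega$. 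Since $\Omega_0(t)$ appears, this is a linear \emph{time-varying} system, so I cannot appeal to LTI eigenvalue arguments; instead I will construct a strict Lyapunov function.

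First I would try the energy-like function $\frac{k_P}{2}\|Z_k^\vee\|^2 + \frac12\|\Delta\Omega\|^2$. Differentiating along the closed loop, the gyroscopic term is lossless because $\langle Z_k^\vee, Z_k^\vee\times\Omega_0\rangle = 0$, and the two cross terms $\pm k_P\langle Z_k^\vee,\Delta\Omega\rangle$ cancel by symmetry of the inner product, leaving only $-\langle\Delta\Omega, K_D\Delta\Omega\rangle \leq -\lambda_{\min}(K_D)\|\Delta\Omega\|^2$. This is merely negative semidefinite, with no damping in the $Z_k^\vee$ direction, so I would add the cross term $\epsilon\langle Z_k^\vee,\Delta\Omega\rangle$, as in the function $V$ of \eqref{def:V:track}, to inject damping into $Z_k^\vee$.

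Next I would compute $\dot V$ for the augmented function. The new contribution $\epsilon(\langle\dot Z_k^\vee,\Delta\Omega\rangle + \langle Z_k^\vee,\Delta\dot\Omega\rangle)$ supplies a negative definite term $-\epsilon k_P\|Z_k^\vee\|^2$ together with indefinite cross terms; the only place $\Omega_0(t)$ survives is inside $\epsilon\langle Z_k^\vee\times\Omega_0,\Delta\Omega\rangle$, which I would dominate using $\|Z_k^\vee\times\Omega_0\| \leq M\|Z_k^\vee\|$, from $\|\Omega_0(t)\|\leq M$ and part 4 of Lemma \ref{lemma:prelim}, and $\|K_D\Delta\Omega\| \leq \lambda_{\max}(K_D)\|\Delta\Omega\|$. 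Collecting terms yields $\dot V \leq -V_2$ with $V_2$ the quadratic form in $(\|Z_k^\vee\|,\|\Delta\Omega\|)$ written in the statement.

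Finally I would pin down $\epsilon$ so that everything is positive definite. Viewing $V_1$, $V$ and $V_2$ as quadratic forms in the scalar pair $(\|Z_k^\vee\|,\|\Delta\Omega\|)$, positive definiteness reduces to a two-by-two discriminant condition: $\epsilon < \sqrt{k_P}$ makes $V_1$, and hence $V \leq V_1$ via Cauchy--Schwarz, positive definite, while $\epsilon < 4k_P\lambda_{\min}(K_D)/(4k_P + (M+\lambda_{\max}(K_D))^2)$ makes $V_2$ positive definite; these are exactly the constraints collected in \eqref{eps:inequality}. Positive definiteness of $V_1$ and $V_2$ then furnishes a constant $C>0$ with $CV_1 \leq V_2$, which is \eqref{C2:V2:V3}, and combining with $V \leq V_1$ gives $\dot V \leq -V_2 \leq -CV_1 \leq -CV$, hence $V(t)\leq e^{-Ct}V(0)$ and exponential decay of $(Z_k^\vee,\Delta\Omega)$; together with \eqref{Zs:exp} for $Z_s$ this proves exponential stability of the origin for \eqref{Zs:Zk:Om:tv}. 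The main obstacle is the indefinite, time-varying cross-coupling $Z_k^\vee\times\Omega_0$: the whole argument hinges on the gyroscopic identity $\langle Z_k^\vee, Z_k^\vee\times\Omega_0\rangle = 0$ keeping the leading dissipation clean, and on choosing $\epsilon$ small enough that this term cannot overwhelm the $-\epsilon k_P\|Z_k^\vee\|^2$ damping.
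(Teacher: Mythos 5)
Your proposal is correct and follows essentially the same route as the paper's own proof: the same cross-term-augmented Lyapunov function $V$, the same comparison functions $V_1$ and $V_2$ with the same $\epsilon$ constraint, and the same chain $\dot V \leq -V_2 \leq -CV_1 \leq -CV$. The added motivation for why the pure energy function fails and the cross term is needed is sound but does not change the argument.
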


The following proposition is a variant of Proposition \ref{proposition:delta:3}.
\begin{proposition}\label{proposition:delta:4}
For any two positive numbers $k_P$ and $\epsilon$ and any positive definite symmetric matrix $K_D \in \mathbb R^{3\times 3}$ such that
\begin{equation}\label{inequal:eps:2}
0 < \epsilon < \min \left \{ \sqrt{k_P}, \frac{4k_P\lambda_{\min}(K_D)}{4k_P + (\lambda_{\max}(K_D))^2}\right \},
\end{equation}
 the controller
\begin{equation}\label{delta:u:kpd:Omega0}
\Delta u = -k_P Z_k^\vee - K_D \Delta \Omega - \epsilon (Z_k^\vee \times \Omega_0)
\end{equation}
exponentially stabilizes the origin for the system \eqref{Zs:Zk:Om:tv}. 
\begin{proof}
The exponential stability of \eqref{Zs:Zk:Om:tv:a}  has already been shown in the proof of Theorem \ref{proposition:delta:1}, so we now focus on the stability of \eqref{Zs:Zk:Om:tv:b}  and \eqref{Zs:Zk:Om:tv:c} with the feedback \eqref{delta:u:kpd:Omega0}. Consider the same function $V_1$ as that defined in \eqref{def:V1:track}. Let 
\begin{align*}
V_2 &= \epsilon k_p\|Z_k^\vee\|^2 + (\lambda_{\rm min}(K_D) - \epsilon) \|\Delta \Omega \|^2  -\epsilon \lambda_{\rm max}(K_D)\|Z_k^\vee\| \|\Delta \Omega\|.
\end{align*}
By \eqref{inequal:eps:2}, the two functions $V_1$ and  $V_2$ are both positive definite quadratic functions of $(\|Z_k^\vee\|, \|\Omega\|)$, so there exists a constant $C>0$  such that  \eqref{C2:V2:V3} holds. Consider the function $V$ defined in \eqref{def:V:track}, which  is a positive definite quadratic function of $(Z_k^\vee, \Delta \Omega)$ and  satisfies \eqref{V1:V:V2}. It is then straightforward to show that along any trajectory of the subsystem \eqref{Zs:Zk:Om:tv:b} and \eqref{Zs:Zk:Om:tv:c} with the control  \eqref{delta:u:kpd:Omega0},  $
\frac{d}{dt}V \leq -V_2  \leq -CV_1 \leq -C V$
by \eqref{C2:V2:V3} and \eqref{V1:V:V2}.
Hence, $V(t) \leq e^{-Ct}V(0)$ for all $t\geq 0$, which implies  that the closed-loop subsystem  \eqref{Zs:Zk:Om:tv:b} and \eqref{Zs:Zk:Om:tv:c}  is exponentially stable with the control \eqref{delta:u:kpd:Omega0}. This completes the proof.

\end{proof}
\end{proposition}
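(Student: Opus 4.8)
The plan is to mirror the proof of Proposition~\ref{proposition:delta:3} almost verbatim, since the structural situation is identical: the subsystem \eqref{Zs:Zk:Om:tv:a} in $Z_s$ is decoupled from the rest and its exponential stability $\|Z_s(t)\| \leq e^{-2k_e t}\|Z_s(0)\|$ was already established in the proof of Proposition~\ref{proposition:delta:1}. Thus the entire task reduces to showing exponential stability of the $(Z_k^\vee, \Delta\Omega)$ subsystem \eqref{Zs:Zk:Om:tv:b}--\eqref{Zs:Zk:Om:tv:c} under the feedback \eqref{delta:u:kpd:Omega0}. For this I would reuse the cross-term Lyapunov function $V$ of \eqref{def:V:track}; the first bound $\epsilon < \sqrt{k_P}$ in \eqref{inequal:eps:2} makes $V$ a positive definite quadratic form in $(Z_k^\vee, \Delta\Omega)$, and Cauchy--Schwarz gives $V \leq V_1$ as in \eqref{V1:V:V2}.

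First I would check that $V_1$ and the $V_2$ stated in the proposition are positive definite quadratic forms in the pair $(\|Z_k^\vee\|, \|\Delta\Omega\|)$. For $V_1$ this is immediate from $\epsilon < \sqrt{k_P}$. For $V_2 = \epsilon k_P\|Z_k^\vee\|^2 + (\lambda_{\min}(K_D)-\epsilon)\|\Delta\Omega\|^2 - \epsilon\lambda_{\max}(K_D)\|Z_k^\vee\|\|\Delta\Omega\|$ the positivity of the leading coefficients together with the negative-discriminant requirement $(\epsilon\lambda_{\max}(K_D))^2 < 4\epsilon k_P(\lambda_{\min}(K_D)-\epsilon)$ is, after dividing by $\epsilon$ and rearranging, exactly the second bound $\epsilon < 4k_P\lambda_{\min}(K_D)/(4k_P + \lambda_{\max}(K_D)^2)$ in \eqref{inequal:eps:2} (this bound also forces $\epsilon < \lambda_{\min}(K_D)$, so the $\|\Delta\Omega\|^2$ coefficient is positive). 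With both forms positive definite in the same nonnegative variables, the existence of $C>0$ with $CV_1 \leq V_2$ as in \eqref{C2:V2:V3} follows at once.

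The heart of the argument is the derivative of $V$ along \eqref{Zs:Zk:Om:tv:b}--\eqref{Zs:Zk:Om:tv:c} with the control \eqref{delta:u:kpd:Omega0}. Substituting $\dot Z_k^\vee = Z_k^\vee\times\Omega_0 + \Delta\Omega$ and $\Delta\dot\Omega = \Delta u$, I would first use $\langle Z_k^\vee, Z_k^\vee\times\Omega_0\rangle = 0$ to kill all gyroscopic self-products and watch the two $\pm k_P\langle Z_k^\vee,\Delta\Omega\rangle$ terms cancel. The \emph{decisive new feature}, and the only genuinely delicate point of sign-bookkeeping, is that the extra feedback term $-\epsilon(Z_k^\vee\times\Omega_0)$ in \eqref{delta:u:kpd:Omega0} contributes, through $\langle\Delta\Omega,\Delta u\rangle$, a term $-\epsilon\langle\Delta\Omega, Z_k^\vee\times\Omega_0\rangle$ that cancels exactly the $+\epsilon\langle Z_k^\vee\times\Omega_0, \Delta\Omega\rangle$ arising from $\epsilon\langle\dot Z_k^\vee, \Delta\Omega\rangle$. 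This cancellation removes any dependence on a bound $M$ for $\|\Omega_0\|$, which is precisely why \eqref{inequal:eps:2} carries $\lambda_{\max}(K_D)$ where Proposition~\ref{proposition:delta:3} carried $M + \lambda_{\max}(K_D)$. After bounding $-\langle\Delta\Omega, K_D\Delta\Omega\rangle \leq -\lambda_{\min}(K_D)\|\Delta\Omega\|^2$ and $-\epsilon\langle Z_k^\vee, K_D\Delta\Omega\rangle \leq \epsilon\lambda_{\max}(K_D)\|Z_k^\vee\|\|\Delta\Omega\|$, the surviving expression is exactly $-V_2$.

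Finally, chaining $\tfrac{d}{dt}V \leq -V_2 \leq -CV_1 \leq -CV$ through \eqref{C2:V2:V3} and \eqref{V1:V:V2} yields $V(t) \leq e^{-Ct}V(0)$, hence exponential decay of $(Z_k^\vee, \Delta\Omega)$; combined with the already-established exponential decay of $Z_s$ this gives exponential stability of the origin for the full system \eqref{Zs:Zk:Om:tv}. I expect essentially all the difficulty to sit in the cross-term cancellation of the derivative computation, while the remaining positivity facts are handed to us directly by the hypothesis \eqref{inequal:eps:2}.
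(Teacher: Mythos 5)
Your proposal is correct and follows essentially the same route as the paper: the same decoupling of the $Z_s$ subsystem, the same Lyapunov functions $V$, $V_1$, $V_2$, and the chain $\frac{d}{dt}V \leq -V_2 \leq -CV_1 \leq -CV$. The only difference is that you spell out the cross-term cancellation produced by the $-\epsilon(Z_k^\vee\times\Omega_0)$ feedback term (which the paper leaves as "straightforward to show"), and your computation there is accurate.
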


The following proposition essentially derives the control law in equation (13) of Lee et al. \cite{LeChEu17} which was derived using geometric control theory therein, but is easily derived  here with the linearized dynamics \eqref{Zs:Zk:Om:tv}. \begin{proposition}\label{proposition:delta:5}
For any $k_R>0$ and $k_\Omega>0$, the controller
\begin{equation}\label{LeChEu17:again}
\Delta u = -k_R Z_k^\vee - k_\Omega \Delta \Omega + \Delta \Omega \times \Omega_0
\end{equation}
exponentially stabilizes the origin for the system \eqref{Zs:Zk:Om:tv}.
\begin{proof}
Choose any number $\epsilon$ that satisfies $ 0 < \epsilon < \min \{ \sqrt{k_R}, {4k_Rk_\Omega}/{ (4 k_R + k_\Omega^2)}\}$.
Then, the function $V(Z_k^\vee, \Delta \Omega ) = k_R \|Z_k^\vee\|^2/2 + \epsilon \langle Z_k^\vee, \Delta \Omega \rangle +  \|\Delta \Omega \|^2/2 $ is a positive definite quadratic function of $(Z_k^\vee, \Delta \Omega)$. Along any flow of  \eqref{Zs:Zk:Om:tv:b} and \eqref{Zs:Zk:Om:tv:c}, the derivative of $V$ can be easily computed as $dV/dt = -\epsilon k_R \|Z_k^\vee\|^2 - \epsilon k_\Omega \langle Z_k^\vee, \Delta \Omega \rangle  - (k_\Omega - \epsilon) \|\Delta \Omega \|^2$, which can be easily shown to be a negative definite quadratic function of $(Z_k^\vee, \Delta \Omega)$, which proves the closed-loop exponential stability of the origin for the system \eqref{Zs:Zk:Om:tv}. 
\end{proof}
\end{proposition}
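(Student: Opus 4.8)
The plan is to reuse the decoupling already established for the earlier controllers. The $Z_s$-equation \eqref{Zs:Zk:Om:tv:a} carries no control input and was shown to be exponentially stable in the proof of Proposition \ref{proposition:delta:1} (indeed $\|Z_s(t)\|\le e^{-2k_e t}\|Z_s(0)\|$), so it suffices to prove exponential stability of the origin for the coupled pair \eqref{Zs:Zk:Om:tv:b}--\eqref{Zs:Zk:Om:tv:c} after inserting the feedback \eqref{LeChEu17:again}. Substituting the controller gives the closed-loop equations $\dot Z_k^\vee = Z_k^\vee\times\Omega_0 + \Delta\Omega$ and $\Delta\dot\Omega = -k_R Z_k^\vee - k_\Omega\Delta\Omega + \Delta\Omega\times\Omega_0$.

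For the Lyapunov function I would take the cross-term form $V = \frac{k_R}{2}\|Z_k^\vee\|^2 + \epsilon\langle Z_k^\vee, \Delta\Omega\rangle + \frac{1}{2}\|\Delta\Omega\|^2$ with a small parameter $\epsilon>0$ to be fixed. As a quadratic form in $(Z_k^\vee,\Delta\Omega)$ this is positive definite precisely when $\epsilon^2<k_R$, i.e. when $\epsilon<\sqrt{k_R}$, which is the first entry of the stated bound on $\epsilon$.

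Differentiating $V$ along the closed loop and grouping terms, the computation rests on two elementary vector identities. First, $\langle a, a\times b\rangle=0$ eliminates the self-crossing terms $\langle Z_k^\vee, Z_k^\vee\times\Omega_0\rangle$ and $\langle\Delta\Omega,\Delta\Omega\times\Omega_0\rangle$. Second, the antisymmetry of the scalar triple product, $\langle a\times c, b\rangle=-\langle a, b\times c\rangle$, forces the two surviving $\Omega_0$-dependent terms $\epsilon\langle Z_k^\vee\times\Omega_0,\Delta\Omega\rangle$ and $\epsilon\langle Z_k^\vee,\Delta\Omega\times\Omega_0\rangle$ to cancel exactly. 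What remains is $\dot V = -\epsilon k_R\|Z_k^\vee\|^2 - \epsilon k_\Omega\langle Z_k^\vee,\Delta\Omega\rangle - (k_\Omega-\epsilon)\|\Delta\Omega\|^2$, which is entirely free of $\Omega_0$; in particular, unlike Propositions \ref{proposition:delta:3} and \ref{proposition:delta:4}, no boundedness assumption on $\Omega_0$ is actually needed for this controller.

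It then remains to certify that this quadratic form is negative definite. Viewing it as a quadratic in the scalars $\|Z_k^\vee\|$ and $\|\Delta\Omega\|$ and bounding the indefinite middle term by $|\langle Z_k^\vee,\Delta\Omega\rangle|\le\|Z_k^\vee\|\,\|\Delta\Omega\|$, negative definiteness follows from the leading coefficient $-\epsilon k_R<0$ together with the $2\times 2$ determinant condition $\epsilon k_R(k_\Omega-\epsilon)>\epsilon^2 k_\Omega^2/4$; dividing by $\epsilon$ rearranges this to $\epsilon<4k_Rk_\Omega/(4k_R+k_\Omega^2)$, the second entry of the bound on $\epsilon$ (which also guarantees $\epsilon<k_\Omega$, so the $\|\Delta\Omega\|^2$ coefficient is negative as well). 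Choosing $\epsilon$ below the minimum of the two thresholds makes both $V$ and $-\dot V$ positive definite, so $\dot V\le -cV$ for some $c>0$, giving exponential decay of $(Z_k^\vee,\Delta\Omega)$; combined with that of $Z_s$ this proves exponential stability of the origin for \eqref{Zs:Zk:Om:tv}. I expect the only non-routine step to be spotting the triple-product cancellation of the $\Omega_0$ terms --- once that is in hand, the remainder is a standard sign-definiteness check.
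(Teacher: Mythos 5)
Your proposal is correct and takes essentially the same route as the paper's proof: the identical cross-term Lyapunov function $V = \frac{k_R}{2}\|Z_k^\vee\|^2 + \epsilon\langle Z_k^\vee,\Delta\Omega\rangle + \frac{1}{2}\|\Delta\Omega\|^2$, the identical bound on $\epsilon$, and the identical expression for $dV/dt$ after the triple-product cancellation. The details you make explicit (the decoupled $Z_s$-subsystem handled via Proposition \ref{proposition:delta:1}, the $2\times 2$ determinant check, and the observation that boundedness of $\Omega_0$ is not needed here) are precisely what the paper's terser proof leaves implicit.
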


The following theorem puts together the five preceding propositions to provide various exponentially tracking controllers for the rigid body system \eqref{rigid:eq}.
\begin{theorem}\label{theorem:killing}
The following controller
\begin{equation}\label{THE:tracking:control:rigid}
u = u_0 + \Delta u,
\end{equation}
where $\Delta u$ is any of \eqref{Delta:u:rigid},  \eqref{Delta:u:rigid:PID}, \eqref{delta:u:kpd}, \eqref{delta:u:kpd:Omega0} and \eqref{LeChEu17:again} with 
\begin{equation}\label{Zk:killing}
Z_k = \Skew (R_0^T\Delta R)^\vee = \Skew(R_0^TR)^\vee,
\end{equation}
enables the  rigid body system   \eqref{rigid:eq} to track the reference trajectory $(R_0(t), \Omega_0(t))$ exponentially.
 \begin{proof}
By \eqref{ZR0R}, $\|\Delta R(t)\| = \|R_0(t)Z(t)\| = \|Z(t)\|$, so  exponential stability of \eqref{Zs:Zk:Om:tv} implies that of \eqref{rigid:tilde:ell}. Hence, this theorem follows from Theorem \ref{theorem:Khalil} and Propositions \ref{proposition:delta:1} --  \ref{proposition:delta:5}.
\end{proof}
\end{theorem}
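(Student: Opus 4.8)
The plan is to treat this theorem as a synthesis of what has already been established: Propositions \ref{proposition:delta:1}--\ref{proposition:delta:5} each deliver exponential stabilization of the transformed system \eqref{Zs:Zk:Om:tv}, and Theorem \ref{theorem:Khalil} already converts exponential stabilization of the linearized ambient dynamics into exponential tracking on the manifold. The only genuinely new content is the bridge connecting the transformed coordinates $(Z_s, Z_k^\vee, \Delta\Omega)$ to the ambient error coordinates $(\Delta R, \Delta\Omega)$, so I would devote the proof to making that bridge precise and then simply quote the two ingredients.

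First I would establish a norm equivalence between the two coordinate descriptions. From the change of variables $Z = R_0^T(t)\Delta R$ in \eqref{ZR0R} and part 1 of Lemma \ref{lemma:prelim}, left multiplication by the rotation $R_0^T(t)$ is a Frobenius isometry, so $\|\Delta R\| = \|Z\|$ for every $t$. Because $Z = Z_s + Z_k$ is the orthogonal splitting into symmetric and skew-symmetric parts, $\|Z\|^2 = \|Z_s\|^2 + \|Z_k\|^2$, and part 3 of Lemma \ref{lemma:prelim} gives $\|Z_k\|^2 = 2\|Z_k^\vee\|^2$. Hence the ambient error state $(\Delta R, \Delta\Omega)$ and the transformed state $(Z_s, Z_k^\vee, \Delta\Omega)$ carry equivalent Euclidean norms, and \eqref{ZR0R}--\eqref{def:Zk} is a time-varying linear isomorphism whose gain is uniformly bounded in $t$. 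Consequently exponential decay of $(Z_s, Z_k^\vee, \Delta\Omega)$ is equivalent to exponential decay of $(\Delta R, \Delta\Omega)$, so each of Propositions \ref{proposition:delta:1}--\ref{proposition:delta:5} yields exponential stability of the origin for the linearized ambient system \eqref{rigid:tilde:ell} under its feedback.

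Next I would check the hypotheses of Theorem \ref{theorem:Khalil} in order to lift this linear-level conclusion to exponential tracking of the genuinely nonlinear system \eqref{rigid:eq} on $\SO \times \mathbb R^3$. The residual field $f(t,z) = \tilde X_e(x_0(t)+z, u_0(t)-K(t)z) - \tilde X_e(x_0(t), u_0(t))$ must have a derivative $\partial f/\partial z$ that is bounded and Lipschitz on a ball $B_r$ uniformly in $t$. This holds because the transversally stabilized field in \eqref{rigid:tilde:eq} is polynomial, at most cubic, in the entries of $R$ and affine in $\Omega$ and $u$; its partial derivatives are therefore polynomial and hence bounded and Lipschitz on any bounded neighborhood, with the assumed boundedness of $(R_0(t),\Omega_0(t))$ and $u_0(t)$ making these estimates uniform in $t$. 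Writing each feedback in ambient coordinates through $Z_k^\vee = \left(\Skew(R_0^T\Delta R)\right)^\vee$ as in \eqref{Zk:killing} and invoking Theorem \ref{theorem:Khalil} then produces exponential tracking on the manifold.

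The main obstacle I expect is the bookkeeping of the feedback form rather than any hard estimate. Theorem \ref{theorem:Khalil} is phrased for a static time-varying linear feedback $\Delta u = -K(t)\Delta x$, yet the PID-type law \eqref{Delta:u:rigid:PID} contains the integral term $K_I\int_0^t Z_k^\vee(\tau)\,d\tau$, and the laws \eqref{Delta:u:rigid}, \eqref{delta:u:kpd:Omega0}, \eqref{LeChEu17:again} carry reference-dependent terms in $\Omega_0(t)$ and $u_0(t)$. For the integral case I would augment the ambient state with the integrator coordinate and apply Theorem \ref{theorem:Khalil} to the augmented system, relying on Proposition \ref{proposition:delta:2} for stability of the augmented linearization; for the remaining cases the extra terms are themselves bounded time-varying linear functions of $\Delta x$ and can be absorbed into $K(t)$ without spoiling either the isometry or the uniform boundedness of the gains. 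With this single observation in place, the theorem reduces to citing Theorem \ref{theorem:Khalil} together with Propositions \ref{proposition:delta:1}--\ref{proposition:delta:5}.
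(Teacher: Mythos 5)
Your proof takes essentially the same route as the paper's: establish that the change of variables $Z = R_0^T\Delta R$ is a uniform-in-$t$ isometry so that exponential stability of \eqref{Zs:Zk:Om:tv} transfers to \eqref{rigid:tilde:ell}, and then invoke Theorem \ref{theorem:Khalil} together with Propositions \ref{proposition:delta:1}--\ref{proposition:delta:5}. The paper's proof is a one-liner that leaves implicit the points you spell out --- the orthogonality of the $\Sym$/$\Skew$ splitting, the verification of the bounded-Lipschitz hypothesis of Theorem \ref{theorem:Khalil}, and the state augmentation needed to fit the integral term of \eqref{Delta:u:rigid:PID} into the static-feedback format --- so your version is a more complete rendering of the same argument.
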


\begin{remark}
As can be seen in \eqref{Zk:killing},  $Z_k$ can be computed without  computing $\Delta R = R- R_0(t)$. As a result, all the control laws for the rigid body system   \eqref{rigid:eq} on $\SO \times \mathbb R^3$ provided in Theorem \ref{theorem:killing} can be computed using matrix multiplications on $\SO$ although they have been derived with  $\Delta R$ in $\mathbb R^{3\times 3}$. In other words, all the control laws in Theorem \ref{theorem:killing} are intrinsic on $\SO \times \mathbb R^3$ though they are derived in the ambient Euclidean space $\mathbb R^{3\times 3} \times \mathbb R^3$. 
\end{remark}
\begin{remark}
One can observe that the subsystem \eqref{Zs:Zk:Om:tv:b} coincides with the $\dot \eta$ equation in (16)   in the paper by Lee  et al,\cite{LeLeMc11} where equation (16)   therein  is derived through so-called variational linearization. Since we have extended the rigid body system to ambient Euclidean space, our  linearization is the usual Jacobian linearization taken in Euclidean space, which is not only simpler than the variational one, but also allows us to  rigorously and easily apply the Lyapunov linearization method in one signle global Cartesian coordinate system with  the transversal dynamics   \eqref{Zs:Zk:Om:tv:a} taken into account. Also, thanks to the added term $-\nabla \tilde V$, the $Z_s$-subsystem   \eqref{Zs:Zk:Om:tv:a}, which is decoupled from the subsystem \eqref{Zs:Zk:Om:tv:b} and \eqref{Zs:Zk:Om:tv:c}, is  exponentially stable by itself. Without it, i.e. if $k_e =0 $, the $Z_s$-dynamics would be only neutrally stable, not enabling us to directly apply the Lyapunov linearization method.
\end{remark}

We carry out a simulation to show a good tracking performance of the controller  \eqref{THE:tracking:control:rigid} with \eqref{delta:u:kpd:Omega0}  for the rigid body system \eqref{rigid:eq} or  \eqref{rigid:tilde:eq} with $k_e=1$.
The control parameters are chosen as 
\[
k_P=4, \quad K_D = 2I, \quad \epsilon = 1.
\]
The reference trajectory $(R_0(t), \Omega_0(t)) \in \SO \times \mathbb R^3$ with the reference control signal $u_0(t) \in \mathbb R^3$ are chosen as 
\begin{align}
&R_0(t)=
\begin{bmatrix}
 \cos^2 t & ( 1+ \sin t)\cos t \sin t &  (\sin t -\cos^2 t)\sin t\\
 -\sin t \cos t & \cos^2t - \sin^3t & (1 + \sin t)\cos t \sin t \\
 \sin t & -\cos t \sin t & \cos^2 t
\end{bmatrix}, \label{tracking:rigid:R0}\\
&\Omega_0(t) = \begin{bmatrix}
-1-\sin t, & (-1 + \sin t)\cos t, &
-\sin t -\cos^2 t
\end{bmatrix}^T, \label{tracking:rigid:Omega0}\\
& u_0(t) = \dot \Omega_0(t)=
 \begin{bmatrix}
-\cos t, & \sin t + \cos^2 t -\sin^2 t,& -\cos t + 2\cos t\sin t
\end{bmatrix}^T,\label{tracking:rigid:u0}
\end{align}
which satisfy \eqref{ref:traj:rigid}. Notice that if the reference trajectory $R_0(t)$ is parameterized by the $Z-Y-X$ Euler angles, then the parameterization will become singular at $t = \pi/2 + k\pi$, $k \in \mathbb Z$. Hence, the use of Euler angles for  controller design  is not desirable. The initial condition is chosen as
\[
R(0) = \exp (0.99\pi \hat e_2), \quad \Omega (0) =  (-1,-1,-1),
\]
where $R(0)$ is a rotation around $e_2 = (0,1,0)$ through $0.99\pi$ radians. The initial orientation tracking error is almost $2\sqrt 2$ that is the maximum possible orientation error. The tracking errors  are plotted in Figure \ref{figure.tracking:rigid}, which shows a good tracking performance of the controller for the {\it nonlinear} system \eqref{rigid:eq}.

We now carry out a simulation to compare the controller \eqref{THE:tracking:control:rigid} and \eqref{delta:u:kpd:Omega0}  with the controller proposed by Lee\cite{Le11} which is modified for the system  \eqref{rigid:eq}  as follows:
\[
u_{\rm Lee} = -k_R e_R - k_\Omega e_\Omega - \hat \Omega R^TR_0\Omega_0 + R^TR_0\dot \Omega_0,
\]
where
\[
e_R = \frac{1}{\sqrt{1+ \operatorname{trace}(R_0^TR)}}\Skew(R_0^TR)^\vee, \quad e_\Omega = \Omega - R^TR_0\Omega_0.
\]
For the controller  \eqref{THE:tracking:control:rigid} with \eqref{delta:u:kpd:Omega0}, we use the parameter values: $k_P=4$, $K_D = 2I$ and $\epsilon = 1$. To make a fair comparison, we choose for the controller $u_{\rm Lee}$ the following parameter values: $k_R = 4$ and $k_\Omega =2$. The two controllers are applied to the system \eqref{rigid:eq} with the initial condition $R(0) = \exp (0.9\pi \hat e_2)$ and $\Omega (0) =  (-1,-1,-1)$ for the reference trajectory given in \eqref{tracking:rigid:R0} -- \eqref{tracking:rigid:u0}. The simulation results are plotted in Figure \ref{figure.tracking:rigid:Lee}. We can see that there is a difference between the two controllers in the  transient response. 
The controller by Lee  initially performs better than our controller in attitude tracking but it has a large overshoot in angular velocity tracking and has a huge initial value of control, which is due to the nonlinear term $ 1/\sqrt{1+ \operatorname{trace}(R_0^TR)}$ present in Lee's controller, $u_{\rm Lee}$.  After about $t=5$, both controllers behave similarly, and the  responses of the system are similar to each other.  From these observations, we can draw the conclusion that our linear controller \eqref{THE:tracking:control:rigid} with \eqref{delta:u:kpd:Omega0} is on par with the nonlinear controller $u_{\rm Lee}$ by Lee. However, our controller has been easily obtained with a linear technique whereas the controller by Lee was obtained with a nonlinear technique that is not as easy to use as the linear technique.

\begin{figure}[t]
\begin{center}
\includegraphics[scale = 0.25]{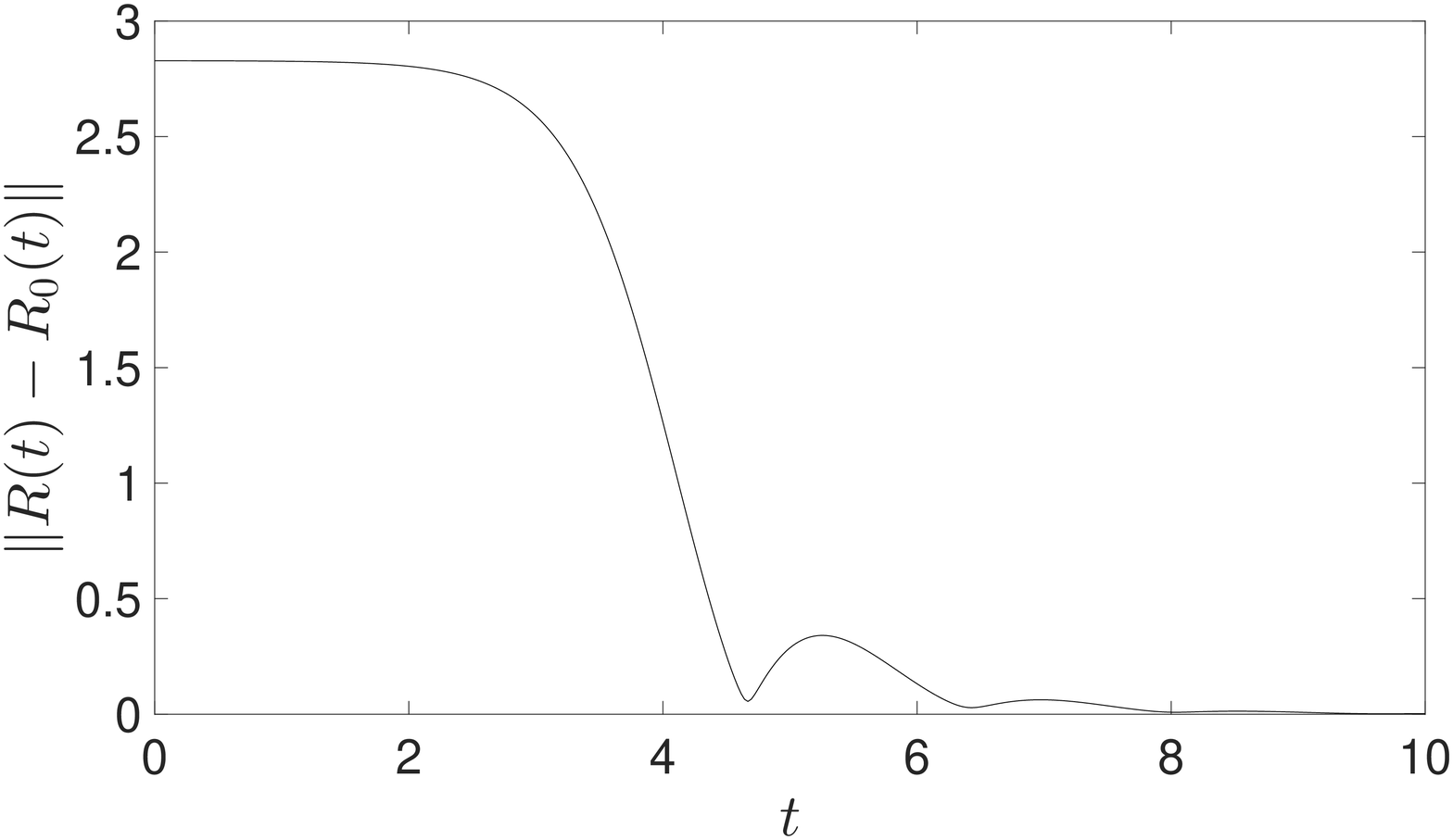}~~~\includegraphics[scale = 0.25]{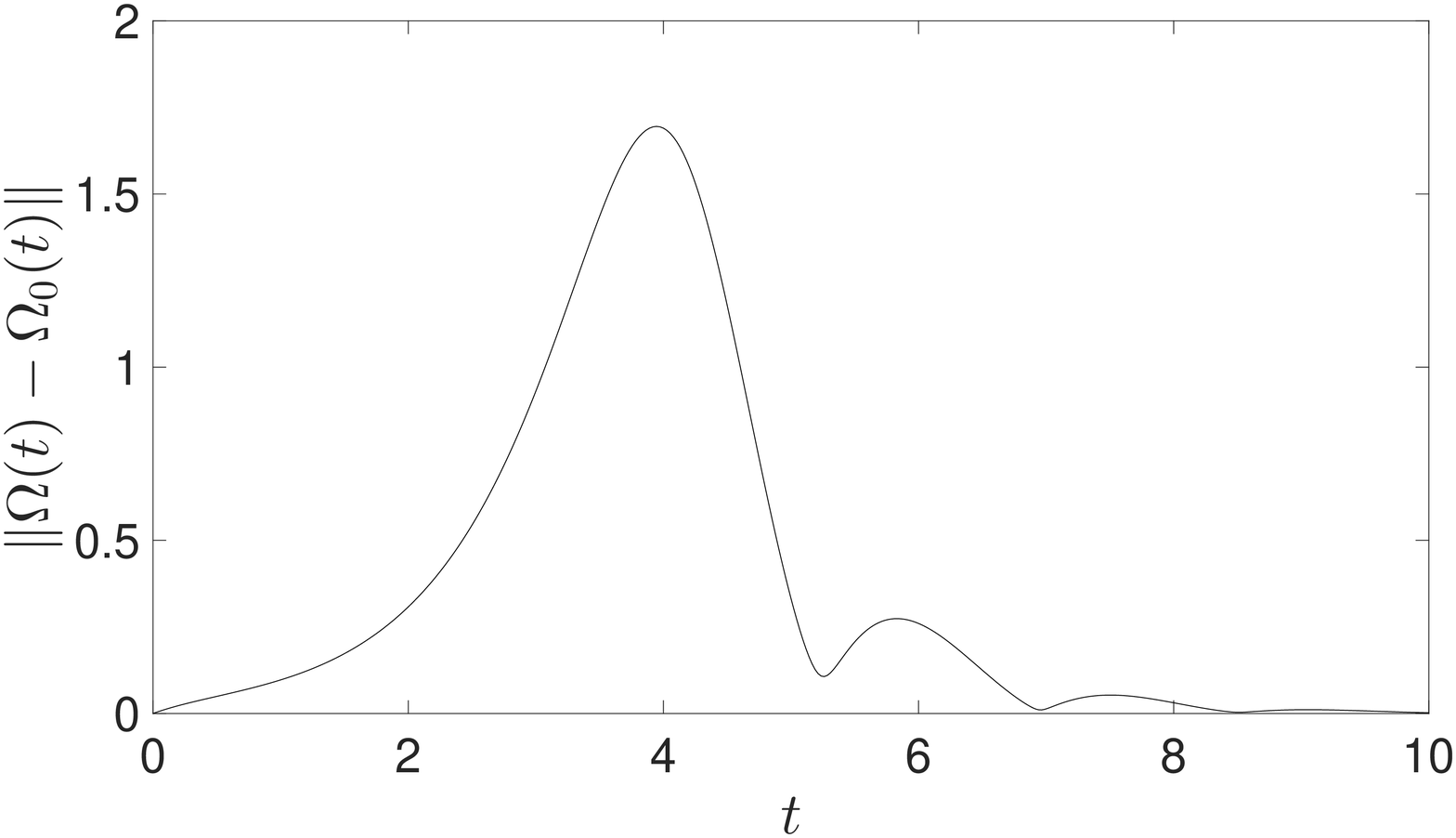}
\end{center}
\caption{\label{figure.tracking:rigid}  The simulation result for  tracking of the reference $(R_0(t), \Omega_0(t))$ by the linear controller  \eqref{THE:tracking:control:rigid} with \eqref{delta:u:kpd:Omega0}  for the rigid body system.}
\end{figure}

\begin{figure}[t]
\begin{center}
\includegraphics[scale = 0.25]{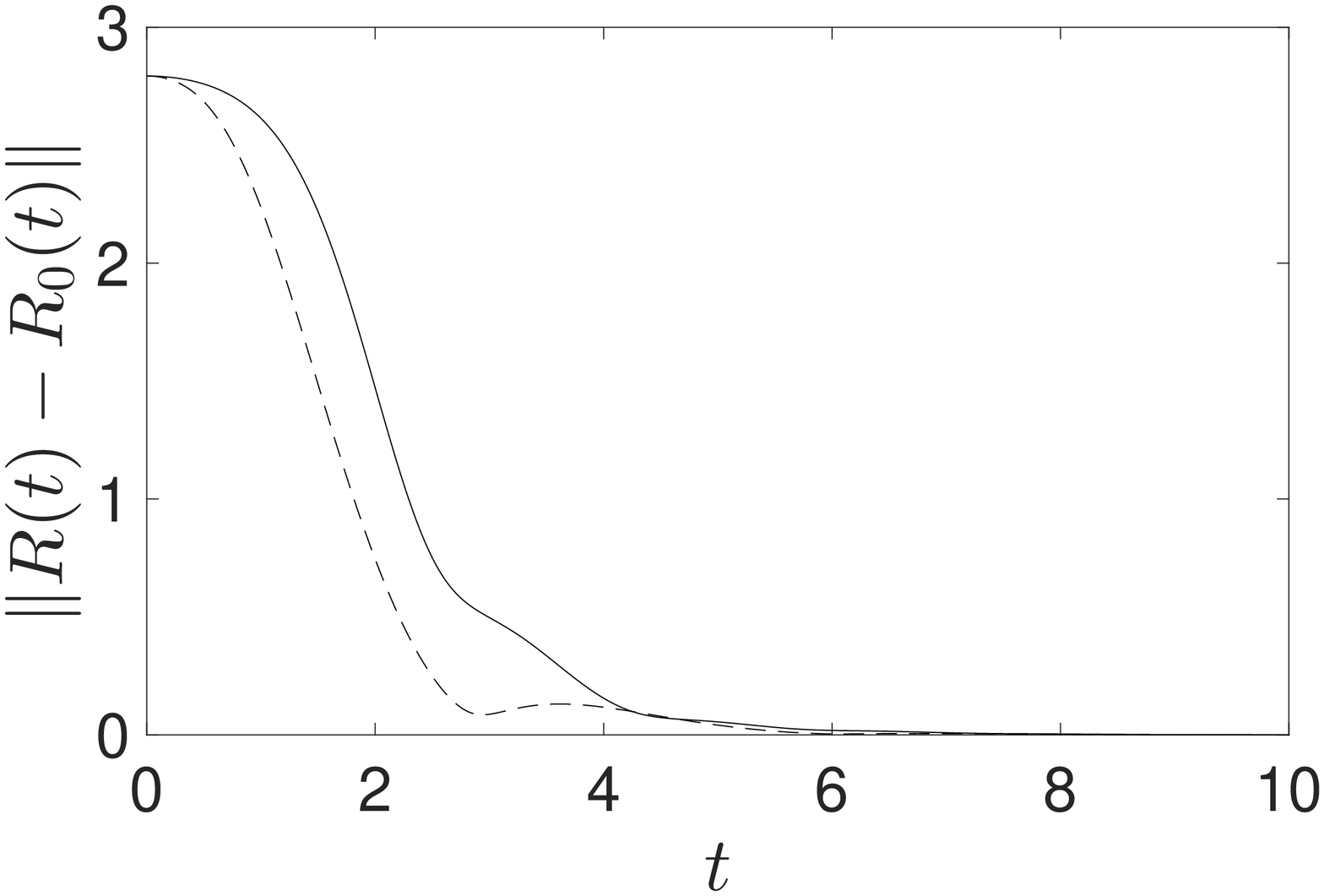}~~~\includegraphics[scale = 0.25]{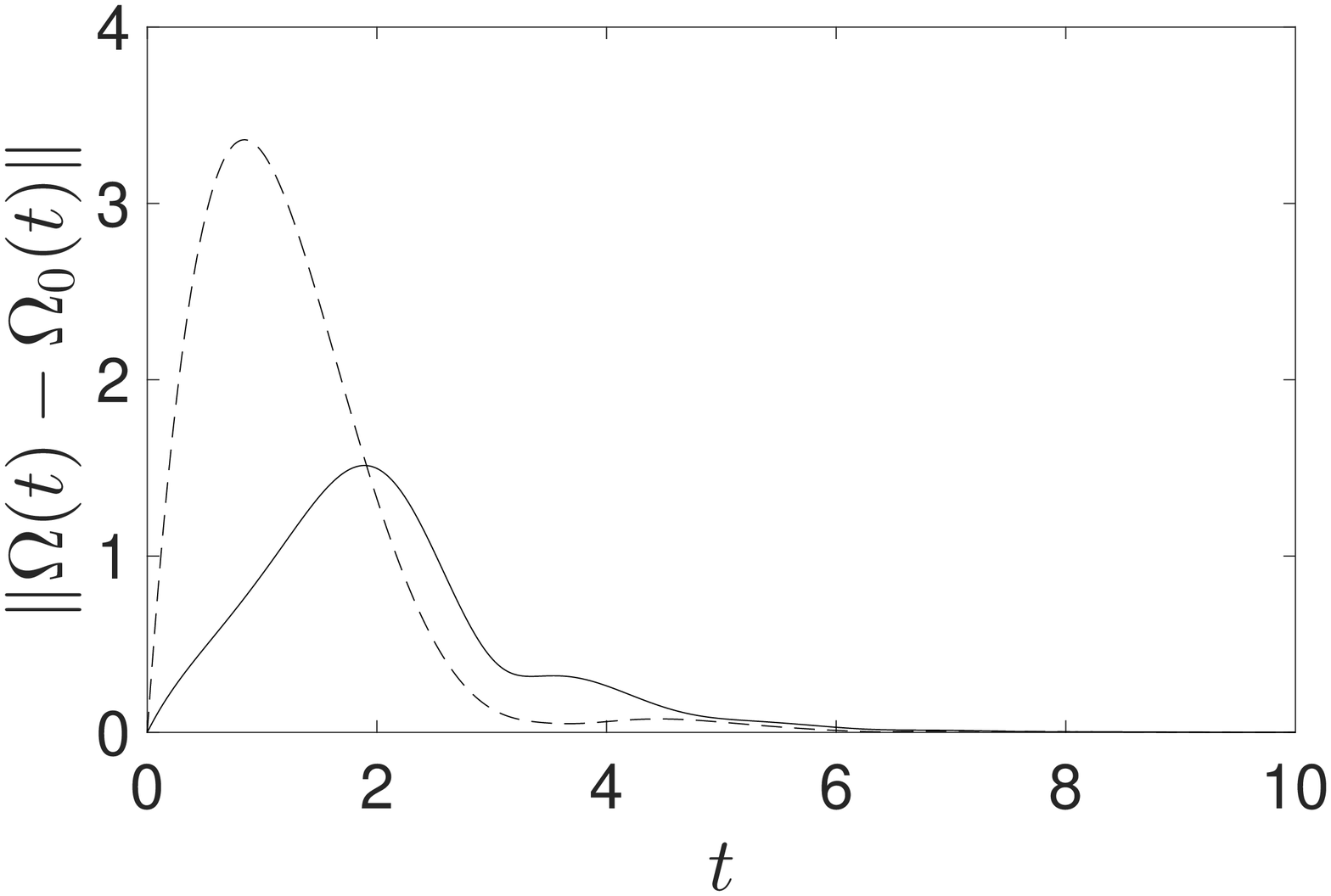}~~~\includegraphics[scale = 0.25]{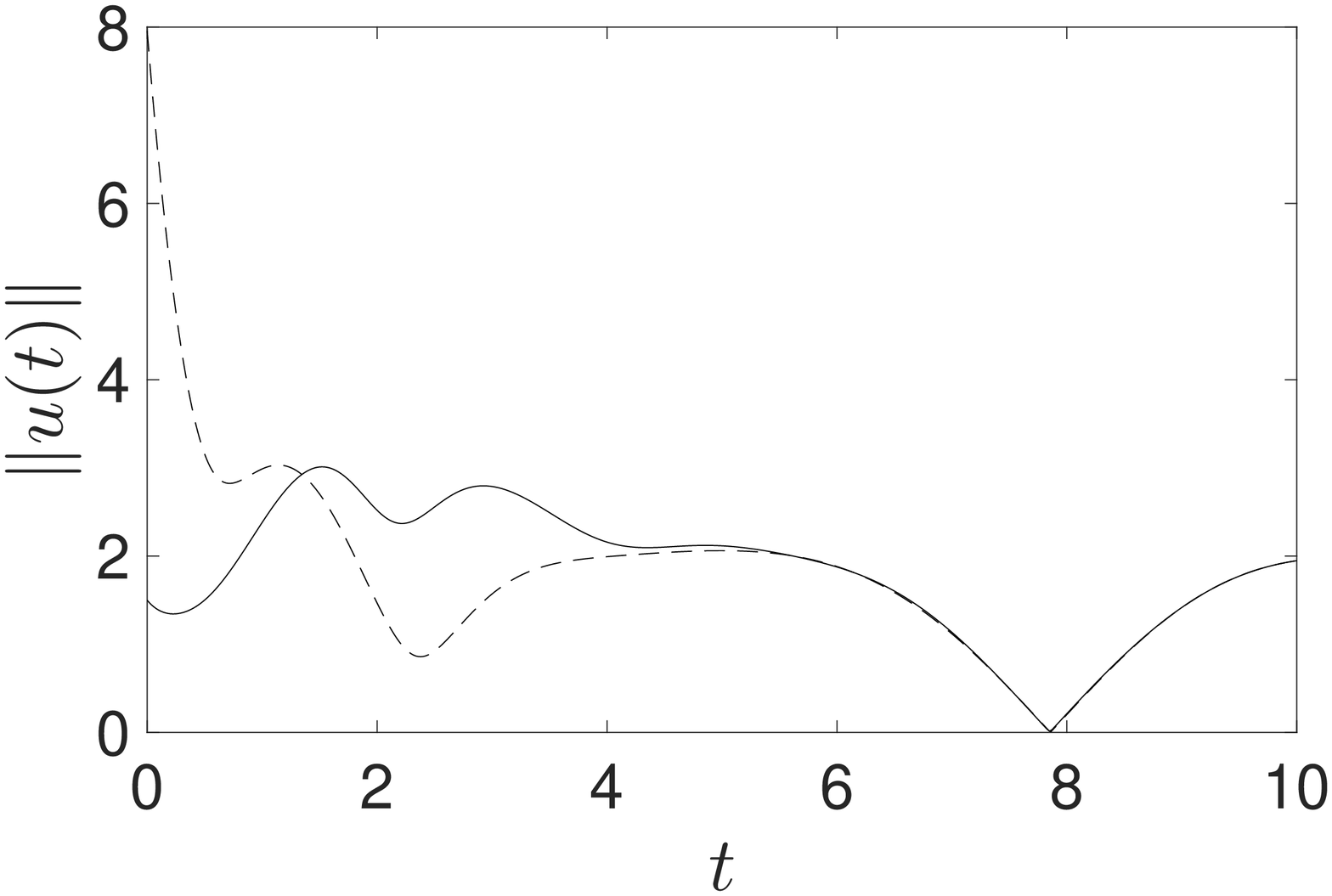}
\end{center}
\caption{\label{figure.tracking:rigid:Lee}  The simulation results for  tracking the reference $(R_0(t), \Omega_0(t))$ by the linear controller  \eqref{THE:tracking:control:rigid} with \eqref{delta:u:kpd:Omega0}  (solid)  and the nonlinear controller by Lee (dashed) for the rigid body system.}
\end{figure}

\subsection{Tracking Controller Design for  the Quadcopter System}
The equations of motion of the quadcopter system are given by
\begin{subequations}\label{quad:dynamics}
\begin{align}
\dot{R} &= {R} \hat {\Omega}, \label{equation:R}\\
{\mathbb I} \dot{\Omega} &= {\mathbb I} \Omega \times\Omega + \bf \tau,  \label{equation:ang:vel}\\
 \ddot{ x} &= -g { e}_3 + f {R} {e}_3, \label{equation:translation}
\end{align}
\end{subequations}
where  $x $ is the $\mathbb R^3$-vector for the position of  the quadcopter, $R$ is the $3\times 3$ rotation matrix for orientation, and $\Omega \in \mathbb R^3$ is the $\mathbb  R^3$-vector for body angular velocity. Here, $f \geq 0$ is the upward control thrust \textit{per mass} and $\tau = (\tau_1, \tau_2, \tau_3) \in \mathbb R^3$ is the control torque on the quadcopter expressed  in the body frame. The parameter  $g$ denotes the gravitational acceleration;  $\mathbb I$ is the $3\times 3$ moment of inertia matrix; and $e_3 =  (0,0,1)$. Although $f$ is a thrust per mass unit-wise, it shall be simply called a thrust in this paper. Refer to the book by Lee  et al.\cite{LeLeMc17} for the derivation of \eqref{quad:dynamics}.

Assume that the full state is available and apply the feedback
\begin{equation}\label{tau:from:u}
\tau =  - {\mathbb I}\Omega \times \Omega + {\mathbb I} u
\end{equation}
to transform the subsystem \eqref{equation:ang:vel} to 
\[
\dot \Omega = u,
\]
where $u  \in \mathbb R^3$ is the new control sub-vector replacing $\tau \in \mathbb R^3$. Extend dynamically the subsystem \eqref{equation:translation} by introducing a double integrator through the thrust  variable as follows:
\begin{equation}\label{f:extension}
\ddot f = q,
\end{equation}
where $q \in \mathbb R $ is now a new control variable, and $f$ and $\dot f$ are now regarded as state variables.
As done for the rigid body system, we embed $\SO$ to $\mathbb R^{3\times 3}$ and subtract $\nabla \tilde V$, with $\tilde V$ given in \eqref{def:V:tilde}, from the equations of motion   of the quadcopter  to get the following  equations of motion  in the ambient Euclidean space:
\begin{subequations} \label{modified:quad:extension}
\begin{align}
\dot{R} &= {R} \hat {\Omega} - k_eR (R^TR-I), \label{modified:quad:extension:a}\\
 \dot{\Omega} &= u,  \label{modified:quad:extension:b}\\
 \ddot{ x} &= -g { e}_3 + f {R} {e}_3, \label{modified:quad:extension:c}\\
\ddot f &= q. \label{modified:quad:extension:d}
\end{align}
\end{subequations}
Choose a reference trajectory 
\[
(R_0(t),\Omega_0(t), x_0(t), \dot x_0(t), f_0(t), \dot f_0(t))
\]
with $R_0(t) \in \SO$ for all $t\geq 0$, 
and a reference control signal 
\[
(u_0(t), q_0(t))
\]
such that they satisfy the equations of motion \eqref{modified:quad:extension}. It is understood that $\dot x_0(t)$ and $\dot f_0(t)$  are  the time derivatives of $x_0(t)$ and $f_0(t)$, respectively. It is further assumed that $\Omega_0(t), \dot \Omega_0(t), f_0(t), \dot f_0(t)$ and  $\ddot f_0(t)$ are bounded for $t\geq 0$, and  there is a constant $\delta >0$ such that 
\[
f_0(t) \geq \delta \quad \forall t\geq 0.
\]
Define the tracking error variables:
\begin{align*}
&\Delta R = R-R_0(t), \,\, \Delta \Omega = \Omega - \Omega_0(t), \,\, \Delta x = x - x_0(t),\\
&\Delta f = f-f_0(t), \,\, \Delta u = u - u_0(t), \,\, \Delta q = q - q_0(t).
\end{align*}
Then, linearize the system \eqref{modified:quad:extension} along the reference trajectory and use the state transformation given in \eqref{ZR0R} -- 
\eqref{def:Zk} replacing $\Delta R$, to obtain the following linearized system:
\begin{subequations}\label{linearized:quad}
\begin{align}
\dot Z_s &= [Z_s, \hat \Omega_0] - 2k_e Z_s,\label{linearized:quad:a}\\
\dot Z_k^\vee &= Z_k^\vee \times \Omega_0 + \Delta \Omega,  \label{linearized:quad:b}\\
\Delta \dot \Omega &= \Delta u, \label{linearized:quad:c}\\
\Delta \ddot x & = \Delta f R_0 e + f_0R_0(Z_s+Z_k) e_3,\label{linearized:quad:d}\\
\Delta \ddot f &= \Delta  q.\label{linearized:quad:e}
\end{align}
\end{subequations}
Retaining all the other state variables, we replace the state variable  $\Delta \Omega \in \mathbb R^3$, via \eqref{linearized:quad:b}, with  $\dot Z_k^\vee \in \mathbb R^3$ or  $\dot Z_k \in \mathfrak {so} (3)$.  Apply the feedback 
\begin{equation}\label{deltau:u:tile}
\Delta u = -(Z_k^\vee \times \Omega_0 + \Delta \Omega) \times \Omega_0  - Z_k^\vee \times \dot \Omega_0 + \tilde u, 
\end{equation}
so as to replace \eqref{linearized:quad:b} and \eqref{linearized:quad:c} with the following second-order equation:
\[
\ddot Z_k^\vee = \tilde u,
\]
where $\tilde u = (\tilde u_1, \tilde u_2, \tilde u_3)\in \mathbb R^3$ is the new control sub-vector replacing $\Delta u$.  Then, the  system \eqref{linearized:quad} is transformed to the following:
\begin{subequations}\label{linearized:quad:second}
\begin{align}
\dot Z_s &= [Z_s, \hat \Omega_0] - 2k_e Z_s, \label{linearized:quad:second:a}\\
\ddot Z_k^\vee &= \tilde u,\label{linearized:quad:second:b} \\
\Delta \ddot x & = \Delta f R_0 e + A_0(Z_s + Z_k) e_3,\label{linearized:quad:second:c}\\
\Delta \ddot f &= \Delta  q,\label{linearized:quad:second:d}
\end{align}
\end{subequations}
where the matrix-valued signal
\[
A_0(t) = f_0(t)R_0(t) \in \mathbb R^{3\times 3}
\]
is introduced for convenience.  Let
\begin{equation}\label{def:small:zk}
z_k =(z_{k1}, z_{k2}, z_{k3}) := Z_k^\vee \in \mathbb R^3
\end{equation}
so that 
\begin{equation}\label{def:small:zk:2}
Z_k = \begin{bmatrix}
0 & -z_{k3} & z_{k2} \\
z_{k3} & 0 & -z_{k1} \\
-z_{k2} & z_{k1} & 0
\end{bmatrix}.
\end{equation}

\begin{lemma}\label{lemma:coordinate:change}
The coordinate system
\begin{equation}\label{coord:sys:1}
(Z_s, Z_k^\vee,  \dot Z_k^\vee, \Delta x, \Delta \dot x, \Delta f, \Delta \dot f)
\end{equation}
can be globally replaced with
\begin{equation}\label{coord:sys:2}
(Z_s, \Delta x, \Delta \dot x, \Delta \ddot x, \Delta \dddot x,  z_{k3}, \dot z_{k3}).
\end{equation}
The coordinates $\Delta \ddot x$ and $\Delta \dddot x$ in \eqref{coord:sys:2} are expressed in terms of the coordinates \eqref{coord:sys:1} as 
\begin{align}
\Delta \ddot x &= (\Delta f R_0 + A_0Z_k + A_0Z_s) e_3,  \label{delta:ddotx}\\
\Delta \dddot x &= ( \Delta \dot f R_0 + A_0 \dot Z_k + \Delta f \dot R_0 + \dot A_0(Z_s+Z_k) + A_0 ([Z_s, \hat\Omega_0] - 2k_eZ_s) \big )e_3. \label{delta:dddotx} 
\end{align}
The coordinates $\Delta f$, $\Delta \dot f$, $z_{ki}$, $\dot z_{ki}$, $i=1,2$,  in \eqref{coord:sys:1} are expressed in terms of the coordinates \eqref{coord:sys:2} as 
\begin{align}
\begin{bmatrix}
z_{k2}, & z_{k1}, & \Delta f
\end{bmatrix}^T &= B_0^{-1}R_0^T  ( \Delta \ddot x - A_0Z_s e_3), \label{z:delta}\\
\begin{bmatrix}
\dot z_{k2},&  \dot z_{k1},& \Delta \dot f
\end{bmatrix}^T &= B_0^{-1}R_0^T(  \Delta \dddot x - ( \Delta f \dot R_0 + \dot A_0(Z_s+Z_k) + A_0 ([Z_s, \hat\Omega_0] - 2k_eZ_s))e_3 ),   \label{z:delta:dot}
\end{align}
where
\begin{equation}\label{def:B0}
B_0(t) = \operatorname{diag} [  f_0(t), -f_0(t), 1]\in \mathbb R^{3\times 3}.
\end{equation}
\begin{proof}
Differentiate \eqref{linearized:quad:second:c} with respect to $t$ and use  \eqref{linearized:quad:second:a} to replace $\dot Z_s$ with $[Z_s, \hat \Omega_0] - 2k_e Z_s$, so as to obtain the expression for $\Delta \dddot x$ in \eqref {delta:dddotx}. From the definition of the vector $z_k$ in \eqref{def:small:zk} or \eqref{def:small:zk:2},  $Z_ke_3 = z_{k2}e_1 - z_{k1}e_2$, where $e_1 = (1,0,0)$ and $e_2 = (0,1,0)$. Hence, it is straightforward to get \eqref{z:delta} and \eqref{z:delta:dot} from \eqref{delta:ddotx} and \eqref{delta:dddotx}, respectively.
\end{proof}
\end{lemma}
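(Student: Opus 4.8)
The plan is to prove the lemma by exhibiting the change of variables between \eqref{coord:sys:1} and \eqref{coord:sys:2} explicitly in \emph{both} directions and then checking that it is a global diffeomorphism. The five coordinate blocks $Z_s, \Delta x, \Delta \dot x, z_{k3}, \dot z_{k3}$ are common to both lists and carry over unchanged, so the whole content of the lemma is that the six scalars $(z_{k1}, z_{k2}, \dot z_{k1}, \dot z_{k2}, \Delta f, \Delta \dot f)$ in \eqref{coord:sys:1} can be traded bijectively for the six scalars $(\Delta \ddot x, \Delta \dddot x)$ in \eqref{coord:sys:2}. I would also point out at the outset the structural reason $z_{k3}$ and $\dot z_{k3}$ must be retained separately: since $Z_k e_3 = z_{k2}e_1 - z_{k1}e_2$ is independent of $z_{k3}$, the variables $\Delta \ddot x, \Delta \dddot x$ can never carry information about $z_{k3}, \dot z_{k3}$.

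For the forward map, I would first record that \eqref{delta:ddotx} is a verbatim copy of \eqref{linearized:quad:second:c}, expressing $\Delta \ddot x$ directly in terms of \eqref{coord:sys:1}. To obtain \eqref{delta:dddotx} I would differentiate \eqref{linearized:quad:second:c} once in $t$, apply the product rule to the two summands $\Delta f R_0 e_3$ and $A_0(Z_s + Z_k)e_3$, and then use \eqref{linearized:quad:second:a} to substitute $\dot Z_s = [Z_s,\hat\Omega_0] - 2k_e Z_s$. The surviving term $A_0\dot Z_k e_3$ depends on $\dot Z_k^\vee$, which is a coordinate in \eqref{coord:sys:1}, so collecting the terms reproduces \eqref{delta:dddotx}. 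This direction is purely routine differentiation and I would not dwell on it.

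For the inverse map the key algebraic observation is that, using $A_0 = f_0 R_0$, the identity $Z_k e_3 = z_{k2}e_1 - z_{k1}e_2$, and the definition of $B_0$ in \eqref{def:B0}, equation \eqref{delta:ddotx} can be rearranged into
\[
\Delta \ddot x - A_0 Z_s e_3 = R_0 B_0 \begin{bmatrix} z_{k2} \\ z_{k1} \\ \Delta f \end{bmatrix}.
\]
Solving this linear system for the right-hand vector gives \eqref{z:delta}. Differentiating \eqref{delta:dddotx} in the same way produces the \emph{identical} linear part $R_0 B_0$ acting on $(\dot z_{k2}, \dot z_{k1}, \Delta \dot f)$, and the remaining terms $\Delta f\dot R_0 + \dot A_0(Z_s+Z_k) + A_0([Z_s,\hat\Omega_0]-2k_eZ_s)$ involve only $\Delta f$, $Z_s$ and $Z_k$. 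Since $\Delta f, z_{k1}, z_{k2}$ have just been recovered from \eqref{z:delta} and $z_{k3}, Z_s$ are retained coordinates, $Z_k$ is fully known, so these terms are expressible in \eqref{coord:sys:2}; the same inversion then yields \eqref{z:delta:dot}. I would stress that the recovery is therefore sequential: first \eqref{z:delta}, then \eqref{z:delta:dot}.

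The hard part, and the precise point at which the word \emph{global} enters, will be the invertibility of $R_0(t)B_0(t)$ for every $t$. Because $R_0(t)\in\SO$ is orthogonal with $R_0^{-1}=R_0^T$, the only genuine requirement is that $B_0(t)=\operatorname{diag}[f_0(t),-f_0(t),1]$ be invertible, i.e. that $f_0(t)\neq 0$. This is exactly guaranteed, \emph{uniformly in $t$}, by the standing assumption $f_0(t)\geq\delta>0$, and it is this uniform lower bound that upgrades an otherwise local change of variables to a global one. I would finish by observing that both the forward and inverse maps are smooth, being built from the smooth reference signals $R_0,f_0,\Omega_0$ and their derivatives through polynomial operations together with the single division by $f_0$, which is well defined everywhere since $f_0\geq\delta$; hence the coordinate replacement is a global diffeomorphism, as claimed.
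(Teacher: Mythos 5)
Your proposal is correct and follows essentially the same route as the paper's proof: differentiate \eqref{linearized:quad:second:c} using \eqref{linearized:quad:second:a}, exploit $Z_k e_3 = z_{k2}e_1 - z_{k1}e_2$ to exhibit the linear map $R_0 B_0$, and invert. You merely make explicit what the paper dismisses as ``straightforward,'' namely the sequential recovery of $(z_{k1},z_{k2},\Delta f)$ before $(\dot z_{k1},\dot z_{k2},\Delta\dot f)$ and the fact that global invertibility of $B_0$ rests on the standing assumption $f_0(t)\geq\delta>0$.
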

We express the  system \eqref{linearized:quad:second}  in the new coordinates \eqref{coord:sys:2} and  transform it via feedback to simple integrators as in the following theorem.
\begin{theorem}
The  system \eqref{linearized:quad:second}  is transformed to 
\begin{subequations}\label{final:linear:quad}
\begin{align}
\dot Z_s &= [Z_s, \hat \Omega_0] - 2k_e Z_s, \label{final:linear:quad:a}\\
\Delta x^{(4)} &= v, \label{final:linear:quad:b}\\
\ddot z_{k3} &= w, \label{final:linear:quad:c}
\end{align}
\end{subequations}
where $(v,w) \in \mathbb R^3 \times \mathbb R$ is the new control vector, 
by the feedback
\begin{subequations}\label{tildeu:from:final}
\begin{align}
\tilde u_3 &= w,\label{tildeu:from:final:a}\\
\begin{bmatrix}
\tilde u_2,& \tilde u_1,& \Delta q
\end{bmatrix}^T&= B_0^{-1}R_0^T ( v - Ce_3) \label{tildeu:from:final:b}
\end{align}
\end{subequations}
where
\begin{align}
C &= 2\Delta \dot f \dot R_0 + 2\dot A_0(\dot Z_s +\dot Z_k) + \Delta f \ddot R_0 + \ddot A_0 (Z_s+Z_k) +A_0([\dot Z_s, \hat \Omega_0] + [Z_s, \hat{\dot \Omega}_0] - 2k_e\dot Z_s). \label{def:C:matrix}
\end{align}
In the above expression of $C$,  $\dot Z_s$ is understood as $ [Z_s, \hat \Omega_0] - 2k_e Z_s$.
\begin{proof}
Differentiate \eqref{delta:dddotx} with respect to $t$ and simplify the result using the equations of motion in \eqref{linearized:quad:second}  to obtain
\begin{align*}
\Delta x^{(4)} &= \Delta \ddot f  R_0 e_3+ A_0\ddot Z_ke_3  + Ce_3\\
&= R_0B_0 (\tilde u_2 e_1 + \tilde u_1 e_2 + \Delta q e_3)  + Ce_3,
\end{align*}
with $B_0$ and $C$ defined in \eqref{def:B0} and \eqref{def:C:matrix}, respectively. It  is transformed to \eqref{final:linear:quad:b} by the feedback \eqref{tildeu:from:final:b}. Equation \eqref{final:linear:quad:c} is obtained by taking the inner product of \eqref{linearized:quad:second:b} with $e_3$ and using \eqref{tildeu:from:final:a}.
\end{proof}
\end{theorem}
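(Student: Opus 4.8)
The plan is to produce the two nontrivial equations \eqref{final:linear:quad:b} and \eqref{final:linear:quad:c} of the target system; equation \eqref{final:linear:quad:a} is literally \eqref{linearized:quad:second:a} and needs no argument. The key idea is that Lemma \ref{lemma:coordinate:change} already hands us $\Delta \dddot x$ in \eqref{delta:dddotx}, so $\Delta x^{(4)}$ is obtained by a single further differentiation, after which the control variables enter in exactly the right places to be inverted by the feedback \eqref{tildeu:from:final}.

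First I would differentiate \eqref{delta:dddotx} in $t$, applying the product rule to each of its five summands, and then simplify using the equations of motion in \eqref{linearized:quad:second}: namely $\Delta \ddot f = \Delta q$, $\ddot Z_k^\vee = \tilde u$ (hence $\ddot Z_k = \widehat{\tilde u}$), and $\dot Z_s = [Z_s,\hat\Omega_0]-2k_eZ_s$. The two summands carrying the controls are $\Delta \ddot f\, R_0 e_3$ and $A_0 \ddot Z_k e_3$; I would set these aside and collect all remaining (control-free) terms, which should assemble into precisely the matrix $C$ of \eqref{def:C:matrix}. This yields $\Delta x^{(4)} = (\Delta \ddot f\, R_0 + A_0 \ddot Z_k)e_3 + Ce_3$.

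Next I would factor the control-carrying part into the form $R_0 B_0(\tilde u_2 e_1 + \tilde u_1 e_2 + \Delta q\, e_3)$. This uses $A_0 = f_0 R_0$, the identity $\widehat{\tilde u}\,e_3 = \tilde u \times e_3 = (\tilde u_2,-\tilde u_1,0)$ from part 4 of Lemma \ref{lemma:prelim}, and the definition \eqref{def:B0} with $B_0 = \operatorname{diag}[f_0,-f_0,1]$. Substituting the feedback \eqref{tildeu:from:final:b}, so that $\tilde u_2 e_1 + \tilde u_1 e_2 + \Delta q\, e_3 = B_0^{-1}R_0^T(v - Ce_3)$, and invoking $R_0 B_0 B_0^{-1} R_0^T = I$ (valid since $R_0 \in \SO$), the expression telescopes to $\Delta x^{(4)} = (v - Ce_3) + Ce_3 = v$, which is \eqref{final:linear:quad:b}. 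For \eqref{final:linear:quad:c} I would simply take the inner product of \eqref{linearized:quad:second:b} with $e_3$, giving $\ddot z_{k3} = e_3 \cdot \ddot Z_k^\vee = e_3 \cdot \tilde u = \tilde u_3 = w$ by \eqref{tildeu:from:final:a}.

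The main obstacle is purely the bookkeeping in the fourth differentiation: several summands split into two under the product rule, and one must check that the $\dot A_0$ contributions arising from three distinct summands combine into the single coefficient $2\dot A_0(\dot Z_s + \dot Z_k)$, and that the two copies of $\Delta \dot f\, \dot R_0$ merge into $2\Delta \dot f\, \dot R_0$. The one genuinely non-mechanical step is recognizing, through $\dot Z_s = [Z_s,\hat\Omega_0]-2k_eZ_s$, that $A_0 \ddot Z_s = A_0\big([\dot Z_s,\hat\Omega_0] + [Z_s,\widehat{\dot\Omega}_0] - 2k_e\dot Z_s\big)$, which is exactly the commutator term inside $C$; once this identification is made, matching the collected terms against \eqref{def:C:matrix} is routine.
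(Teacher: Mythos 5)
Your proposal is correct and follows essentially the same route as the paper's own proof: differentiate \eqref{delta:dddotx}, use the equations of motion \eqref{linearized:quad:second} to isolate the control-carrying terms $\Delta\ddot f\,R_0e_3 + A_0\ddot Z_k e_3$ and collect the remainder into $Ce_3$, factor the control part as $R_0B_0(\tilde u_2e_1+\tilde u_1e_2+\Delta q\,e_3)$ via $\widehat{\tilde u}\,e_3=\tilde u\times e_3$, invert with the feedback \eqref{tildeu:from:final:b}, and obtain \eqref{final:linear:quad:c} by projecting \eqref{linearized:quad:second:b} onto $e_3$. Your bookkeeping of how the terms assemble into $C$ (the doubled $\Delta\dot f\,\dot R_0$ and $\dot A_0$ contributions, and $A_0\ddot Z_s$ yielding the commutator terms) is accurate and in fact more explicit than the paper's.
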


\begin{proposition}\label{proposition:quad:linear:control}
Take any four matrices $K_0, K_1, K_2, K_3 \in \mathbb R^{3\times 3}$ such that the polynomial
\[
\det ( \lambda^4 I+ \lambda^3K_3 + \lambda^2 K_2 + \lambda K_1 + K_0 )
\]
is a Hurwitz polynomial in $\lambda$, and take any two positive numbers $a_1$ and $a_0$. Then, the feedback controller
\begin{align}
 v &= -K_3 \Delta \dddot x - K_2\Delta \ddot x - K_1\Delta \dot x - K_0\Delta x, \label{quad:v}\\
 w  &= -a_1 \dot z_{k3} - a_0 z_{k3}\label{quad:w}
\end{align}
makes the origin exponentially stable for the system  \eqref{final:linear:quad}. 
\begin{proof}
The exponential stability of the $Z_s$ dynamics \eqref{final:linear:quad:a} has been already shown in the proof of Proposition \ref{proposition:delta:1}. It is trivial to show the exponential stability of the origin for the subsystem \eqref{final:linear:quad:b} and \eqref{final:linear:quad:c} with the proposed controller.
\end{proof}
\end{proposition}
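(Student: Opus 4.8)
The plan is to exploit the fact that the closed-loop system \eqref{final:linear:quad} decouples completely into three independent linear subsystems, so that exponential stability can be established block by block and then reassembled. No genuine obstacle is anticipated; the entire content is the observation that a decoupled cascade of Hurwitz linear systems is exponentially stable.

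First I would dispose of the $Z_s$-dynamics \eqref{final:linear:quad:a}. This equation involves neither $v$ nor $w$ nor any of the remaining states, and its exponential stability was already established in the proof of Proposition \ref{proposition:delta:1} via the Lyapunov function $V(Z_s) = \|Z_s\|^2/2$, which yields $\|Z_s(t)\| \le e^{-2k_e t}\|Z_s(0)\|$. I would simply invoke that estimate. Next I would treat the $\Delta x$-chain \eqref{final:linear:quad:b}. Substituting the controller \eqref{quad:v} produces the constant-coefficient vector ODE $\Delta x^{(4)} + K_3 \Delta \dddot x + K_2 \Delta \ddot x + K_1 \Delta \dot x + K_0 \Delta x = 0$. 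Collecting the state $\xi = (\Delta x, \Delta \dot x, \Delta \ddot x, \Delta \dddot x) \in \mathbb R^{12}$ puts this in the form $\dot \xi = A_x \xi$, where $A_x$ is a block-companion matrix. The key identification, which I would verify by expanding $\det(\lambda I - A_x)$ in block form, is that the characteristic polynomial of $A_x$ equals $\det(\lambda^4 I + \lambda^3 K_3 + \lambda^2 K_2 + \lambda K_1 + K_0)$, precisely the polynomial assumed Hurwitz. Hence every eigenvalue of $A_x$ lies in the open left half plane and the $\Delta x$-subsystem is exponentially stable.

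For the $z_{k3}$-equation \eqref{final:linear:quad:c}, substituting \eqref{quad:w} gives $\ddot z_{k3} + a_1 \dot z_{k3} + a_0 z_{k3} = 0$, a scalar second-order ODE with characteristic polynomial $\lambda^2 + a_1 \lambda + a_0$. Since $a_1, a_0 > 0$, the Routh--Hurwitz criterion guarantees both roots have negative real part, so this subsystem is exponentially stable as well.

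Finally I would assemble the three estimates. Because the closed-loop vector field is block diagonal across the three groups of states $(Z_s)$, $(\Delta x, \Delta \dot x, \Delta \ddot x, \Delta \dddot x)$, and $(z_{k3}, \dot z_{k3})$, with no cross-coupling surviving after the feedback \eqref{quad:v}--\eqref{quad:w}, the full-state norm is bounded by the sum of the three individual exponential bounds, giving exponential convergence of the origin at the rate $\min\{2k_e, \mu_x, \mu_z\}$, where $\mu_x$ and $\mu_z$ denote the spectral decay rates of the two companion subsystems. The only step requiring a small verification is the companion-form characteristic-polynomial identity, and even that is standard once the block structure is written out; everything else is immediate.
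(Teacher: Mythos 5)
Your proposal is correct and follows exactly the route the paper intends: the paper's proof simply cites the earlier $Z_s$ estimate and declares the remaining two decoupled blocks ``trivial,'' and your write-up supplies the standard details (block-companion characteristic polynomial for the $\Delta x$-chain, Routh--Hurwitz for the scalar $z_{k3}$-equation, and assembly of the decoupled exponential bounds) without deviating from that approach.
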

Notice that  the controller  in  \eqref{quad:v} and \eqref{quad:w} can be expressed in terms of the original variables via Lemma \ref{lemma:coordinate:change} and equations \eqref{ZR0R}, \eqref{def:Zs} and \eqref{linearized:quad:b}.

\begin{proposition}\label{proposition:quad:linear:control:b}
Take any five  matrices $K_0, K_1, K_2, K_3, K_I \in \mathbb R^{3\times 3}$ such that the polynomial
\[
\det (\lambda^5 I+ \lambda^4K_3 + \lambda^3 K_2 + \lambda^2 K_1 + \lambda K_0  + K_I )
\]
is a Hurwitz polynomial in $\lambda$, and take any three  numbers $a_1, a_0, a_I$ such that the polynomial
\[
\lambda^3 + a_1\lambda^2 + a_0\lambda + a_I 
\]
is Hurwitz. Then, the feedback controller
\begin{align*}
 v &=\! -K_3 \Delta \dddot x - K_2\Delta \ddot x - K_1\Delta \dot x - K_0\Delta x - K_I \! \int_0^t \! \Delta x(\tau) d\tau , \\
 w  &= -a_1 \dot z_{k3} - a_0 z_{k3} - a_I\int_0^t z_{k3}(\tau) d\tau
\end{align*}
makes the origin exponentially stable for the system  \eqref{final:linear:quad}. 
\begin{proof}
Trivial.
\end{proof}
\end{proposition}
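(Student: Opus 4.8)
The plan is to exploit the completely decoupled, block-triangular structure of the target system \eqref{final:linear:quad}: the $Z_s$-dynamics \eqref{final:linear:quad:a} are autonomous, the control $v$ drives only the integrator chain \eqref{final:linear:quad:b}, and the control $w$ drives only the integrator chain \eqref{final:linear:quad:c}. Since the exponential stability of \eqref{final:linear:quad:a} was already established in the proof of Proposition \ref{proposition:delta:1}, it suffices to prove exponential stability of the two integral-augmented chains separately and then assemble the three mutually decoupled blocks.

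First I would treat the $\Delta x$-chain \eqref{final:linear:quad:b}. Introduce the integral state $\xi(t) = \int_0^t \Delta x(\tau)\,d\tau$, so that $\dot\xi = \Delta x$, and substitute the proposed $v$-feedback into \eqref{final:linear:quad:b} to obtain the closed-loop relation
\begin{equation*}
\Delta x^{(4)} + K_3 \Delta \dddot x + K_2 \Delta \ddot x + K_1 \Delta \dot x + K_0 \Delta x + K_I \xi = 0 .
\end{equation*}
Writing every quantity as a derivative of $\xi$ converts this into the single constant-coefficient linear vector ODE $\xi^{(5)} + K_3 \xi^{(4)} + K_2 \dddot\xi + K_1 \ddot\xi + K_0 \dot\xi + K_I \xi = 0$ in $\xi \in \mathbb R^3$, whose characteristic equation is exactly $\det(\lambda^5 I + \lambda^4 K_3 + \lambda^3 K_2 + \lambda^2 K_1 + \lambda K_0 + K_I) = 0$. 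This polynomial is Hurwitz by hypothesis, so $\xi$ together with all of its derivatives $\Delta x = \dot\xi$, $\Delta\dot x$, $\Delta\ddot x$, $\Delta\dddot x$ decays exponentially.

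Next I would run the identical reduction on the $z_{k3}$-chain \eqref{final:linear:quad:c} using the integral state $\zeta(t) = \int_0^t z_{k3}(\tau)\,d\tau$, for which $\dot\zeta = z_{k3}$. Substituting the proposed $w$-feedback collapses \eqref{final:linear:quad:c} to the scalar third-order ODE $\dddot\zeta + a_1 \ddot\zeta + a_0 \dot\zeta + a_I \zeta = 0$, whose characteristic polynomial $\lambda^3 + a_1\lambda^2 + a_0\lambda + a_I$ is Hurwitz by assumption; hence $\zeta$, $z_{k3} = \dot\zeta$ and $\dot z_{k3} = \ddot\zeta$ all decay exponentially. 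Concatenating the three exponentially stable, mutually decoupled blocks --- the autonomous $Z_s$-block and the two augmented chains --- then yields exponential stability of the origin of \eqref{final:linear:quad} together with the integrator states.

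Since the analysis is a sequence of routine Hurwitz arguments, I do not expect a genuine obstacle; the only points needing care are bookkeeping. One must check that appending the integrator does not introduce a spurious pole, i.e. that the characteristic polynomials of the two augmented chains agree verbatim with the two Hurwitz polynomials posited in the hypothesis, and one must confirm that the decoupling used to concatenate the blocks is real in the coordinates \eqref{coord:sys:2}, so that $v$ and $w$ never re-enter the autonomous $Z_s$-dynamics. Both facts are immediate from the explicit form of \eqref{final:linear:quad} and of the stated controllers, which is precisely why the proof can be recorded as trivial.
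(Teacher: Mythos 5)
Your argument is correct and is exactly the routine reduction the paper has in mind when it records the proof as ``Trivial'': the three blocks of \eqref{final:linear:quad} are mutually decoupled, the $Z_s$-block was already handled in the proof of Proposition \ref{proposition:delta:1}, and augmenting each integrator chain with its integral state yields constant-coefficient linear ODEs whose characteristic polynomials are precisely the two Hurwitz polynomials in the hypothesis (the same device used, via differentiation rather than state augmentation, in the proof of Proposition \ref{proposition:delta:2}). Nothing in your write-up departs from the paper's intended argument.
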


After  a controller $(v,w)$ is designed as in Propositions  \ref{proposition:quad:linear:control} and \ref{proposition:quad:linear:control:b}, the controller $(\tilde u, \Delta q)$ in \eqref{tildeu:from:final} is computed. Then,  $\Delta u$ is computed via \eqref{deltau:u:tile}, which produces the  control torque $\tau$ in \eqref{tau:from:u} with $u = u_0(t) + \Delta u$ and the control thrust $f$ via \eqref{f:extension} with   $q = q_0(t) + \Delta q$. 

\begin{theorem}\label{theorem:quad:tarcking:mfd}
The controller $(\tau, f)$  designed as above enables the  quadcopter system \eqref{quad:dynamics} to exponentially track the reference trajectory $(R_0(t), \Omega_0(t), x_0(t), \dot x_0(t))$.
\begin{proof}
It is easy to prove that the origin is exponentially stable for the linear system \eqref{linearized:quad} with the controller $(\Delta u, \Delta q)$ designed as described above. By Theorem \ref{theorem:Khalil}, the controller $ (u, q)$ designed as described above enables the extended quadcopter system  \eqref{modified:quad:extension} to exponentially track the reference trajectory $(R_0(t), \Omega_0(t), x_0(t), \dot x_0(t), f_0(t), \dot f_0(t))$ from which the present theorem follows.
\end{proof}
\end{theorem}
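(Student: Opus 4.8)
The plan is to propagate exponential stability backward along the chain of coordinate and feedback transformations that carried the nonlinear extended system \eqref{modified:quad:extension} down to the decoupled linear system \eqref{final:linear:quad}, and then to lift the resulting exponential stability of the ambient linearization to exponential tracking for the original quadcopter \eqref{quad:dynamics} through Theorem \ref{theorem:Khalil}. First I would invoke Proposition \ref{proposition:quad:linear:control} (or Proposition \ref{proposition:quad:linear:control:b}) to conclude that the designed $(v,w)$ makes the origin exponentially stable for \eqref{final:linear:quad}; there the $Z_s$-block \eqref{final:linear:quad:a} is exponentially stable on its own, exactly as in the proof of Proposition \ref{proposition:delta:1}, while \eqref{final:linear:quad:b}--\eqref{final:linear:quad:c} are integrator chains closed by a Hurwitz gain.

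The heart of the argument is to show that this exponential decay persists when the closed-loop trajectory is re-expressed in the original error variables. With the composite controller in force, the closed loop written in the coordinates \eqref{coord:sys:2} is exactly \eqref{final:linear:quad}, hence decays exponentially. The map between \eqref{coord:sys:2} and \eqref{coord:sys:1}, given explicitly in Lemma \ref{lemma:coordinate:change}, is at each fixed $t$ a \emph{linear} bijection whose coefficients are built from $R_0(t)$, $A_0(t)=f_0(t)R_0(t)$, their derivatives, $\Omega_0(t)$, and the matrix $B_0(t)$ of \eqref{def:B0}. I would verify that this map is a Lyapunov transformation, i.e. that it and its inverse are bounded uniformly in $t$: invertibility holds because $B_0(t)=\operatorname{diag}[f_0(t),-f_0(t),1]$ is nonsingular with $\|B_0(t)^{-1}\|$ bounded thanks to the standing assumption $f_0(t)\ge\delta>0$ and the orthogonality of $R_0(t)\in\SO$, while uniform boundedness of every coefficient follows from the assumed boundedness of $\Omega_0,\dot\Omega_0,f_0,\dot f_0,\ddot f_0$ together with $\dot R_0=R_0\hat\Omega_0$. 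Since a bounded, boundedly-invertible change of coordinates carries an exponentially decaying trajectory to an exponentially decaying trajectory, the decay transfers to the coordinates \eqref{coord:sys:1}. The remaining passage to the error variables of \eqref{linearized:quad} is the single bounded relation $\Delta\Omega=\dot Z_k^\vee-Z_k^\vee\times\Omega_0$ of \eqref{linearized:quad:b} (bounded because $\Omega_0$ is), and $\|\Delta R\|=\|R_0 Z\|=\|Z\|=\|Z_s+Z_k\|$ by \eqref{ZR0R} and $R_0(t)\in\SO$; hence the full ambient linearization of \eqref{modified:quad:extension} along the reference is exponentially stable at the origin.

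It then remains to lift the linear conclusion to the nonlinear original system, which I would do with Theorem \ref{theorem:Khalil}. After unwinding \eqref{tildeu:from:final} and \eqref{deltau:u:tile}, the composite controller is a \emph{time-varying linear} feedback of the tracking error---every cross-product term appearing there is linear in the state because $\Omega_0$ and $\dot\Omega_0$ are prescribed reference signals---so it has precisely the form $u=u_0-K(t)\,\Delta x$ demanded by that theorem. The one hypothesis still to be checked is that the map $f(t,z)$ of Theorem \ref{theorem:Khalil} has $\partial f/\partial z$ bounded and Lipschitz on a ball $B_r$ uniformly in $t$; this holds because the right-hand side of \eqref{modified:quad:extension} is polynomial in the state through the products $fRe_3$, $R\hat\Omega$ and $R(R^TR-I)$, and both the reference trajectory and the gains are bounded, so on the bounded set $B_r$ all relevant derivatives are bounded and Lipschitz uniformly in time. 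Theorem \ref{theorem:Khalil} then gives exponential tracking of the reference by the extended system \eqref{modified:quad:extension}; and since \eqref{modified:quad:extension} coincides with the original quadcopter \eqref{quad:dynamics} on the invariant submanifold where $R\in\SO$, restricting the controller to this submanifold---equivalently, Lemma \ref{lemma:exp:tracking}---delivers exponential tracking of $(R_0,\Omega_0,x_0,\dot x_0)$ for \eqref{quad:dynamics}.

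I expect the main obstacle to be the uniform-boundedness verification of the second paragraph: one must ensure that the differential-flatness-style coordinate change of Lemma \ref{lemma:coordinate:change}, which passes through $\Delta\ddot x$ and $\Delta\dddot x$ and inverts $B_0$, neither blows up nor degenerates along the reference. This is exactly what the assumptions $f_0(t)\ge\delta>0$ and the boundedness of $\Omega_0,\dot\Omega_0,f_0,\dot f_0,\ddot f_0$ are in place to secure, and it is the sole point where these hypotheses are genuinely needed.
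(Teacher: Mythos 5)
Your proposal is correct and follows essentially the same route as the paper's own (much terser) proof: establish exponential stability of the origin for the linearization \eqref{linearized:quad} under the designed $(\Delta u,\Delta q)$, then invoke Theorem \ref{theorem:Khalil} to lift this to exponential tracking for the extended system \eqref{modified:quad:extension}, and finally restrict to the invariant manifold. The paper dismisses the first step as ``easy to prove''; your Lyapunov-transformation argument for the coordinate change of Lemma \ref{lemma:coordinate:change} and your verification of the hypotheses of Theorem \ref{theorem:Khalil} are exactly the details being elided there.
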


\begin{remark}
The controllers proposed in the paper by Goodarzi  et al.\cite{GoLeLe15} have two separate modes: attitude controlled flight mode and position controlled flight mode. In contrast, our controllers have the merit to simultaneously control both the attitude and the position of  quadcopter. 
\end{remark}

\begin{remark}
Our controllers have no singularity  since we use only one single global Cartesian coordinate system, whereas the controller proposed by Mellinger and  Kumar\cite{MeKu11} would become singular when the roll angle becomes $\pm \pi/2$, which purely comes from the use of an Euler angle coordinate system. This shows the merit of our method that utilizes one single global Cartesian  coordinate system in the  ambient Euclidean space. It will be interesting to re-do the work by Mellinger and  Kumar\cite{MeKu11} in this framework.
\end{remark}

\begin{remark}
Although the dynamic extension \eqref{f:extension} is simple, it has the drawback that the non-negative sign of $f(t)$ may not be preserved along the trajectory even with a positive initial value $f(0)>0$. To remedy this, the following  dynamic extension
\begin{equation}\label{new:ext}
\dot f = fh, \quad \dot h = q
\end{equation}
was proposed in the paper by Chang and Eun\cite{ChEu17} to replace \eqref{f:extension}, where $h$ is an added state variable replacing $\dot f$.  It is easy to verify that this extension preserves the positive sign of $f(t)$ when $f(0)>0$. The linearization of \eqref{new:ext} along the reference trajectory is computed as
\[
\Delta \dot f = \Delta f h_0 + f_0 \Delta h, \quad \Delta \dot h = \Delta q,
\]
and it shall  replace \eqref{linearized:quad:e} in the linearization of the quadcopter dynamics, where $h_0(t) = \dot f_0(t)/f_0(t)$ and $\Delta h = h - h_0(t)$. It is left to the reader to verify that  with the extension \eqref{new:ext} the consequent linearized quadcopter system can also be transformed to   \eqref{final:linear:quad} via an appropriate feedback control law.
\end{remark}

We now run a simulation to demonstrate a good tracking performance of the proposed controller 
$u = u_0(t) + \Delta u$ and    $q = q_0(t) + \Delta q$ with $\Delta u$ , $\tilde u$,  $\Delta q$, $v$ and $w$ given in  \eqref{deltau:u:tile}, \eqref{tildeu:from:final},  \eqref{quad:v} and \eqref{quad:w},  for the extended quadcopter system \eqref{modified:quad:extension} with $k_e=1$. Choose a reference trajectory for \eqref{modified:quad:extension} as follows:  $R_0(t)$, $\Omega_0(t)$ and $u_0(t)$  are given in \eqref{tracking:rigid:R0} --  \eqref{tracking:rigid:u0}, and $x_0(t)$ and $f_0(t)$ are given as
\begin{align*}
&x_0(t)  = g\begin{bmatrix} \frac{1}{2}t^2 + \frac{4}{9}\sin t - \frac{1}{2}\sin^2 t + \frac{2}{9}\sin t \cos^2 t \\
 \frac{4}{9} - \frac{4}{9}\cos t - \frac{1}{2} \cos t \sin t - \frac{2}{9} \cos t \sin^2 t \\
\frac{1}{2}\sin^2t \end{bmatrix},\\
 &f_0(t) = 2g.
\end{align*}
Choose the following initial condition for \eqref{modified:quad:extension}:
 \begin{align*}
R(0)&= \exp(0.25\pi\hat e_2),\quad
\Omega(0) = (0,0,0),\\
x(0) &= (-0.5g, -0.5g, 0), \quad
\dot x(0) = (0, 0,0),\\
f(0) &= 2g, \quad \dot f(0) = 0,
 \end{align*}
where $R(0)$ is a rotation through $\pi/4$ radians about the axis $e_2 = (0,1,0)$.
By scaling $x$ by $g$, we may assume that $
g = 1$.
Choose the following values of control parameters:
\[
K_3 = 8I, \quad  K_2 = 32I, \quad K_1 = K_0 = 64I,
\quad a_1 = 8, \quad a_0 = 20
\]
for \eqref{quad:v} and \eqref{quad:w}, so that the poles of the tracking error dynamics  \eqref{final:linear:quad:b} for $\Delta x$ are all located  at $-2\pm j2$ and the poles of  \eqref{final:linear:quad:c} for $z_{k3}$ are located at $-4\pm j2$. Apply the resulting  controller $(u,q)$ to \eqref{modified:quad:extension}. The tracking errors and the control thrust $f$ are plotted in Figure \ref{figure.tracking:quad}. The tracking errors all converge to zero as $t\rightarrow \infty$, and the control thrust $f$ converges to the reference thrust $f_0(t) = 2$ as $t\rightarrow \infty$.  To test robustness of the controller to disturbance, we now add disturbance terms to \eqref{modified:quad:extension:b} and \eqref{modified:quad:extension:c} as follows:
\begin{align*}
\dot \Omega &= u + R^Td,\\
 \ddot{ x} &= -g { e}_3 + f {R} {e}_3 +  d,
\end{align*}
where $d(t) = \sin(2\pi(t-3))(1,1,1)$ if $ 3 \leq t \leq 4$,  and $d(t) = 0$ otherwise.   We run a simulation with the same controller without any compensation for the disturbance. The simulation result is plotted in Figure \ref{figure.tracking:quad:dis}, where the two dotted vertical lines denote the start time and end time of the disturbance. We can see that the tracking degrades from $t=3$ till approximately $t=4.2$ due to the effect of disturbance and then gets back to the exponentially convergent mode.  This result shows robustness of our tracking controller to disturbance.

\begin{figure}[h]
\begin{center}
\includegraphics[scale = 0.4]{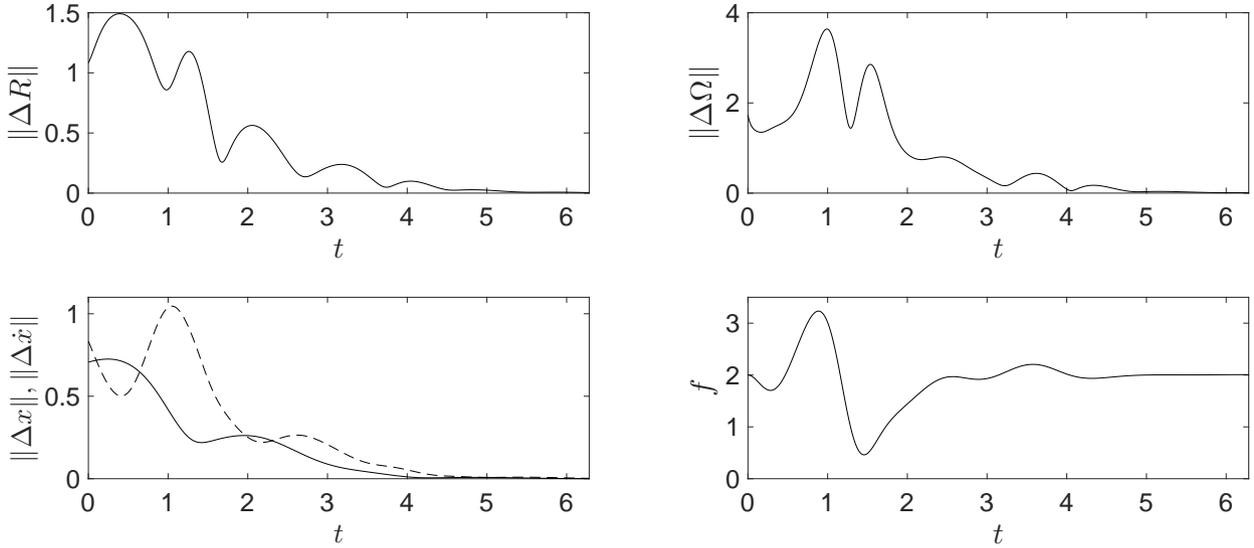}
\end{center}
\caption{\label{figure.tracking:quad}  The trajectory of the  tracking errors and the thrust variable of the quadcopter system \eqref{modified:quad:extension} with the linear controller  described in Theorem \ref{theorem:quad:tarcking:mfd}. In the left bottom plot, the solid line is the trajectory of $\| \Delta x(t)\|$ and the dashed line that of $\| \Delta \dot x(t)\|$. }
\end{figure}

\begin{figure}[h]
\begin{center}
\includegraphics[scale = 0.4]{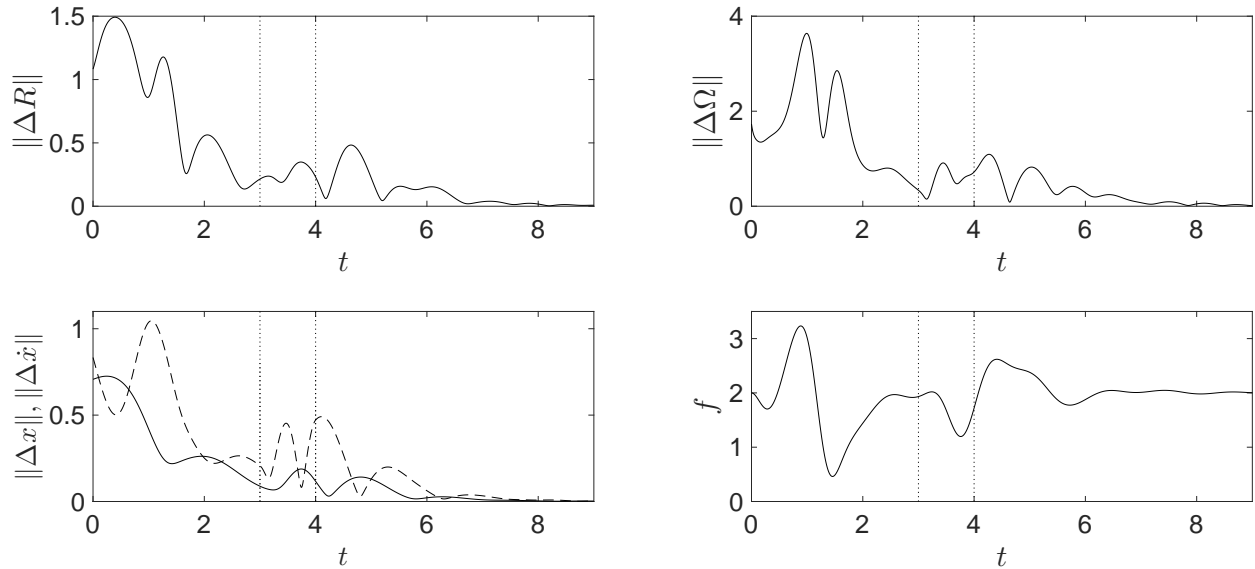}
\end{center}
\caption{\label{figure.tracking:quad:dis}  The trajectory of the  tracking errors and the thrust variable of the quadcopter system \eqref{modified:quad:extension} with the linear controller  described in Theorem \ref{theorem:quad:tarcking:mfd} in the presence of an unknown disturbance during the time interval, $3 \leq t \leq 4$. The two dotted vertical lines denote the time interval $[3,4]$. In the left bottom plot, the solid line is the trajectory of $\| \Delta x(t)\|$ and the dashed line that of $\| \Delta \dot x(t)\|$. }
\end{figure}

\section{Conclusion}
We have presented a method to design controllers in Euclidean space for systems defined on manifolds. The idea is to embed the state-space manifold $M$ of a given control system  to some Euclidean space  $\mathbb R^n$,  extend the system from $M$ to the ambient space $\mathbb R^n$, and modify it outside $M$ to add transversal stability to $M$ in the final dynamics in $\mathbb R^n$. We then design controllers for the final system in the ambient Euclidean space $\mathbb R^n$ and restrict the controllers to $M$ after the synthesis.  Since the controller synthesis is carried out in Euclidean space in this framework, it has the  merit that only one single global Cartesian coordinate system in the ambient Euclidean space is used and all possible controller design methods on $\mathbb R^n$, including the linearization method, can be rigorously applied for controller synthesis.   This method is successfully applied to the  tracking problem for the following two benchmark systems: the fully actuated rigid body system and the quadcopter drone system.  As future work, we plan to consider control constraints such as saturation in the proposed method for which  the technique  developed by Su et al. \cite{SuFaChLu17} is expected to be effective. We also plan to study robustness of the proposed method with respect to measurement errors.


\section*{Acknowledgment}
\begin{ack}                               
This research has been in part supported by KAIST  under grant G04170001 and  by the ICT R\&D program of MSIP/IITP [2016-0-00563, Research on Adaptive Machine Learning Technology Development for Intelligent Autonomous Digital Companion]. 
\end{ack}


\begin{thebibliography}{99}
\bibitem{Bl07} Bloch AM.  {\it Nonholonomic Mechanics and Control}; Springer; 2007.

\bibitem{Bo02} Boothby WM. {\it An Introduction to Differentiable Manifolds and Riemannian Geometry}. 2nd Ed;  Academic Press; 2002.

\bibitem{BoBeMo17}  Borrelli F, Bemporad A, Morari M. {\it Predictive Control for Linear and Hybrid Systems}; Cambridge University Press; 2017.

\bibitem{BuLe04} Bullo F, Lewis AD. {\it Geometric Control of Mechanical Systems: Modeling, Analysis, and Design for Simple Mechanical Control Systems}; Springer; 2004.

\bibitem{Ch11} Chang DE.  A simple proof of the {P}ontryagin maximum principle on manifolds. {\it Automatica} 2011;  47 (3): 630 -- 633.

\bibitem{ChJiPe16}
Chang DE,  Jim\'{e}nez F, Perlmutter M.  Feedback integrators. {\it J. Nonlinear Science} 2016;  26(6): 1693 -- 1721.

\bibitem{ChEu17} Chang DE,  Eun Y. Global chartwise feedback linearization of the quadcopter with a thrust positivity preserving dynamic extension. {\it IEEE Trans. Automatic Control} 2017;  62 (9): 4747 -- 4752.

\bibitem{GoLeLe15} Goodarzi F,  Lee D,  Lee T. Geometric adaptive tracking control of a quadrotor unmanned aerial vehicle on SE(3). {\it  ASME Journal of Dynamic Systems, Measurement, and Control} 2015;  137(9): 091007-091007-12.

\bibitem{GuPo74} Guillemin V, Pollack A. {\it Differential Topology}; Englewood Cliffs, NJ; Prentice Hall; 1974.


\bibitem{Ha67} Hahn W. {\it Stability of Motion}; Berlin; Springer-Verlag; 1967.

\bibitem{Kh02}
  Khalil HK.  {\it Nonlinear Systems}. 3rd Ed. Upper Saddle River, NJ; Prentice Hall; 2002.



\bibitem{Le09} L\'evine J. {\it Analysis and Control of Nonlinear Systems: A Flatness-based Approach}.  Springer; 2009.

\bibitem{Le11} Lee T. Geometric tracking control of the attitude dynamics of a rigid body on SO(3). In: Proc. the American Control Conference; June 2011; San Francisco, CA, USA.

\bibitem{LeLeMc11}  Lee T,  Leok M,  McClamroch N. Stable manifolds of saddle equilibria for pendulum dynamics
on ${\rm S}^2$ and {SO}(3). In: Proc. IEEE Conference on Decision and Control;  December 2011;  Orlando, FL, USA.



\bibitem{LeChEu17} Lee T, Chang DE, Eun Y. Attitude control strategies overcoming the topological obstruction on SO(3). In Proc. 2017 IEEE American Control Conference;  May 2017; Seattle, WA, USA.

\bibitem{LeLeMc17}  Lee T,   Leok M,  McClamroch N. {\it Global Formulations of Lagrangian and Hamiltonian Dynamics on Manifolds: A Geometric Approach to Modeling and Analysis}. Springer;  2017.

\bibitem{MaCo13} Maggiore M,  Consolini L. Virtual holonomic constraints for
{E}uler-{L}agrange systems. {\it IEEE Trans. Automatic Control} 2013;  58(4): 1001 -- 1008.


\bibitem{Na54} Nash J. $C^1$-isometric imbeddings.  {\it Annals of Mathematics} 1954;  60 (3): 383--396.

\bibitem{Na56} Nash J. The imbedding problem for Riemannian manifolds, {\it Annals of Mathematics} 1956;  63 (1): 20--63.

\bibitem{NiFuMa10} Nielsen C, Fulford C, Maggiore M.  Path following using transverse feedback linearization: Application to a maglev positioning system. {\it Automatica} 2010; 46(3): 585 -- 590.

\bibitem{NiMa08} Nielson C, Maggiore M. On local transverse feedback linearization. {\it SIAM J. Control and Optimization} 2008; 47(5): 2227 -- 2250.

\bibitem{MeKu11} Mellinger D,  Kumar V. Minimum snap trajectory generation and control for quadcopters.  In: Proc.  IEEE International Conference on Robotics and Automation;  May 2011; Shanghai, China.

\bibitem{Sa10} Sastry S. {\it Nonlinear Systems}.  New York; Springer; 2010.

\bibitem{ScMuDa12}     Schoellig AP,  Mueller FL,  D'Andrea R. Optimization-based iterative learning for precise quadrocopter trajectory tracking. {\it Autonomous Robots} 2012;  33(1-2), 103 -- 127.


\bibitem{SuFaChLu17} Sun N,  Fang Y, Chen H,  Lu B. Amplitude-saturated nonlinear output feedback antiswing control for underactuated cranes with double-pendulum cargo dynamics. {\it IEEE Trans. Industrial Electronics} 2017;  64(3): 2135-2146.

\end{thebibliography}
\end{document}